\documentclass[11pt]{amsart}
\usepackage{lmodern}
\usepackage{amsmath, amsthm, amssymb, amsfonts}
\usepackage[normalem]{ulem}
\usepackage{hyperref}
\usepackage{enumitem} 

\usepackage{verbatim} 
\usepackage{longtable}

\usepackage{mathtools}
\DeclarePairedDelimiter\ceil{\lceil}{\rceil}

\usepackage{tikz}
\usetikzlibrary{decorations.pathmorphing}
\tikzset{snake it/.style={decorate, decoration=snake}}

\usepackage{caption}

\usepackage{tikz-cd}
\usetikzlibrary{arrows}

\theoremstyle{plain}
\newtheorem{thm}{Theorem}[section]
\newtheorem{cor}[thm]{Corollary}
\newtheorem{lem}[thm]{Lemma}
\newtheorem{prop}[thm]{Proposition}

\theoremstyle{definition}
\newtheorem{defn}[thm]{Definition}

\theoremstyle{remark}
\newtheorem{rmk}[thm]{Remark}

\newcommand{\BA}{{\mathbb{A}}}

\newcommand{\BC}{{\mathbb{C}}}

\newcommand{\BH}{{\mathbb{H}}}

\newcommand{\BP}{{\mathbb{P}}}
\newcommand{\BQ}{{\mathbb{Q}}}
\newcommand{\BR}{{\mathbb{R}}}

\newcommand{\BZ}{{\mathbb{Z}}}

\newcommand{\CA}{{\mathcal A}}

\newcommand{\CD}{{\mathcal D}}

\newcommand{\CM}{{\mathcal M}}
\newcommand{\CN}{{\mathcal N}}

\newcommand{\CP}{{\mathcal P}}

\newcommand{\Fp}{{\mathfrak{p}}}

\DeclareFontFamily{OT1}{rsfs}{}
\DeclareFontShape{OT1}{rsfs}{n}{it}{<-> rsfs10}{}
\DeclareMathAlphabet{\curly}{OT1}{rsfs}{n}{it}

\newcommand\Spec{\operatorname{Spec}}

\usepackage{tikz}
\usepackage{lmodern}
\usetikzlibrary{decorations.pathmorphing}

\title{The $P=W$ identity for isolated cluster varieties: full rank case}
\author{Zili Zhang}
\address{School of Mathematical Sciences, Tongji University, Shanghai, China}
\email{zhangzili@tongji.edu.cn}
\date{\today}

\begin{document}

\maketitle
\begin{abstract}
We initiate a systematic construction of real analytic Lagrangian fibrations from integer matrices. We prove that when the matrix is of full column rank, the perverse filtration associated with the Lagrangian fibration matches the mixed Hodge-theoretic weight filtration of the isolated cluster variety associated with the matrix. 
\end{abstract}
\tableofcontents

\section{Introduction}

\subsection{Perverse filtrations}
Let $Y$ be a complex algebraic variety. The bounded derived category $D_c^b(Y)$ of constructible sheaves of $\BQ$-vector spaces on $Y$ is naturally equipped with a perverse $t$-structure $({^p\CD}^{\le 0},{^p\CD}^{\ge 0})$. For a smooth morphism $f:X\to Y$ with relative dimension $d$, applying the perverse truncations 
\[
^p\tau_{\le k}: D_c^b(Y)\to {^p\CD}^{\le k} ,~~k\in\BZ
\]
on $Rf_*\BQ_X$ defines an increasing filtration
\begin{equation}\label{00}
P_0H^*(X,\BQ)\subset P_1H^*(X,\BQ)\subset\cdots H^*(X,\BQ).
\end{equation}
The filtration (\ref{00}) is called the perverse filtration associated with the morphism $f$. De Cataldo and Migliorini proved in \cite{dCM} that perverse filtrations satisfy a Lefschetz type symmetry: any relatively ample class $\alpha\in H^2(X,\BQ)$ induces an isomorphism
\[
\textup{Gr}^P_{d-k} H^*(X,\BQ)\xrightarrow[\cong]{\cup\alpha^k}\textup{Gr}^P_{d+k} H^*(X,\BQ),~~k\ge0,
\]
called the relative hard Lefschetz property.

\subsection{The P=W phenomenon}
Let $C$ be a smooth projective curve. The moduli spaces of geometric objects on $C$ are naturally endowed with rich algebro-geometrical and topological structures. The character variety $\CM_B$, the moduli space of all $\textup{GL}(n,\BC)$-representations of the fundamental group $\pi_1(C)$, is a affine scheme. The Dolbeault moduli space $\CM_D$, the moduli space of all degree 0 rank $n$ semi-stable Higgs bundles on $C$, is naturally equipped with a Hitchin map $h:\CM_D\to\BA$ onto an affine space. Simpson proves in \cite{Sim} that there exists a canonical homeomorphism $\CM_B\cong\CM_D$ between the quasi-projective varieties, called the non-abelian Hodge correspondence. Therefore, there is a canonical identification of the cohomology groups $H^*(\CM_D,\BQ)=H^*(\CM_B,\BQ)$. Under this identification, one may compare the mixed Hodge theoretic weight filtration on $H^*(\CM_B,\BQ)$ and the perverse filtration on $H^*(\CM_D,\BQ)$ associated with the Hitchin map. The following mysterious $P=W$ identity is conjectured by de Cataldo, Hausel and Migliorini in \cite{dCHM},  proved now by Maulik-Shen in \cite{MS} and Hausel-Mellit-Minets-Schiffmann in \cite{HMMS} independently via different methods.
\begin{thm}[$P=W$]
Under the canonical identification $H^*(\CM_B,\BQ)=H^*(\CM_D,\BQ)$ via the non-abelian Hodge theory, the $P=W$ identity holds:
\[
P_kH^*(\CM_D,\BQ)=W_{2k}H^*(\CM_B,\BQ)=W_{2k+1}H^*(\CM_B,\BQ),~~k\ge0.
\]
\end{thm}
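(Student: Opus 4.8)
The plan is to route the comparison through a third filtration that is intrinsic to the cohomology \emph{ring} — the Chern filtration $C_\bullet$ — and to prove the two identities $P_\bullet H^*(\CM_D,\BQ)=C_\bullet$ and $W_{2\bullet}H^*(\CM_B,\BQ)=W_{2\bullet+1}=C_\bullet$ independently. The input is that $H^*(\CM_D,\BQ)$ is generated as a $\BQ$-algebra by the tautological classes: Künneth components of the Chern classes of a (suitably normalized) universal Higgs bundle on $C\times\CM_D$, taken via the slant product against a point, against $[C]\in H_2(C)$, and against a basis of $H_1(C)$. One then sets $C_kH^*$ to be the span of monomials in these generators of total Chern weight $\le k$, where a generator coming from $c_i$ slanted against a point (resp.\ against $[C]$ or an $H_1$-class) is assigned weight $i$ (resp.\ $i-1$). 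By construction $C_\bullet$ is multiplicative and increasing, and since the non-abelian Hodge identification is a ring isomorphism it transports to both sides.

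On the Betti side one needs $W_{2k}H^*(\CM_B,\BQ)=C_k$. The mixed Hodge structure of the character variety is of Hodge--Tate type, so it suffices to match weights of generators: the weight of each tautological generator equals twice its Chern weight, which can be extracted from the explicit presentation of $H^*(\CM_B,\BQ)$ together with purity and curious hard Lefschetz results for character varieties. Multiplicativity of the weight filtration then upgrades this to equality of filtrations. The easy half on the Dolbeault side, $C_k\subseteq P_k$, uses that the perverse filtration for the Hitchin map $h\colon\CM_D\to\BA$ is multiplicative under cup product (de Cataldo--Migliorini), so one only needs each tautological generator of Chern weight $i$ to lie in $P_i$; this is a local computation on the Hitchin base controlling the perverse amplitude of the pushforward of the universal spectral sheaf.

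The crux is the reverse inclusion $P_k\subseteq C_k$ on the Dolbeault side, i.e.\ that $\mathrm{Gr}^P_\bullet H^*(\CM_D,\BQ)$ is generated by images of tautological classes. The strategy is to establish this first over the elliptic locus $\BA^{\mathrm{ell}}\subset\BA$ of integral spectral curves, where the Hitchin fibers are compactified Jacobians and Ng\^o's support theorem — in the Hitchin-system form of de Cataldo--Rapagnetta--Sacc\`a, combined with the cohomological $\chi$-independence theorem of Maulik--Shen — forces $Rh_*\BQ$ to be, up to shift, a direct sum of IC sheaves with full support, so that $\mathrm{Gr}^P$ over $\BA^{\mathrm{ell}}$ is exhausted by tautological classes. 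One then propagates to all of $\BA$: the restriction to $h^{-1}(\BA^{\mathrm{ell}})$ is strictly compatible with $P_\bullet$ and injective on each $\mathrm{Gr}^P_k$, the injectivity coming from a codimension estimate for $\BA\setminus\BA^{\mathrm{ell}}$ fed into relative hard Lefschetz.

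I expect the main obstacle to be exactly this last step — controlling $\mathrm{Gr}^P$ over the \emph{full} Hitchin base. The complement of the elliptic locus is large enough that a priori $Rh_*\BQ$ could carry extra perverse summands supported there and invisible to tautological classes; excluding them is precisely what the support theorem for the whole Hitchin map must do, and that rests on delicate dimension estimates relating the Hitchin fibration to moduli of one-dimensional sheaves on the cotangent surface, where the $\chi$-independence phenomenon lives. Making the codimension bounds sharp enough that the restriction to $\BA^{\mathrm{ell}}$ remains injective on perverse pieces \emph{uniformly in cohomological degree} is where essentially all of the genuinely new geometric input is concentrated.
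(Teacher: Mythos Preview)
The paper does not prove this statement. It is quoted in the introduction as background, attributed to Maulik--Shen \cite{MS} and Hausel--Mellit--Minets--Schiffmann \cite{HMMS}, and never revisited; the paper's own $P=W$ theorem (Theorem~\ref{thm}) concerns isolated cluster varieties and a hand-built real Lagrangian fibration, not the Hitchin system on a curve. So there is no ``paper's own proof'' to compare your proposal against.

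That said, what you have written is a recognisable outline of the Maulik--Shen approach in \cite{MS}: introduce the Chern filtration generated by tautological classes, match it with the weight filtration on the Betti side using the Hodge--Tate property and the known generators of $H^*(\CM_B)$, and on the Dolbeault side prove $C_\bullet=P_\bullet$ via a global support theorem for $Rh_*\BQ$ combined with $\chi$-independence. Two cautions if you intend to flesh this out. First, multiplicativity of the perverse filtration for the Hitchin map is not a general theorem of de~Cataldo--Migliorini; it is itself one of the hard inputs (it follows \emph{a posteriori} from $P=W$, and the direct arguments for it in the literature are essentially as deep as the support-theoretic machinery you invoke later), so you cannot cite it to get the ``easy'' inclusion $C_k\subset P_k$ for free --- in \cite{MS} that inclusion is obtained by a sheaf-theoretic argument bounding the perversity of tautological classes directly, not by multiplicativity. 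Second, your description of the propagation from $\BA^{\mathrm{ell}}$ to all of $\BA$ via ``injectivity on $\mathrm{Gr}^P$ from a codimension estimate fed into relative hard Lefschetz'' is the right shape but understates what is needed: the actual argument requires the full-support statement on the whole base, not just over the elliptic locus, and the codimension estimates alone are not sufficient to rule out extra supports --- this is exactly where the vanishing-cycle and $\chi$-independence techniques do real work. Your final paragraph correctly flags this as the crux, but the preceding paragraph makes it sound more routine than it is.
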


Where to expect a $P=W$ identity is still a widely open problem. However, when the $P=W$ identity holds, the relative hard Lefschetz symmetry (\ref{00}) induces via the non-abelian Hodge correspondence the so called the curious hard Lefschetz property on the character varieties. A variety $X$ satisfies the curious hard Lefschetz property if there exists a 2-form of mixed Hodge type (2,2) such that
\begin{equation} \label{chl}
\textup{Gr}^W_{2\dim X-2k} H^*(X,\BQ)\xrightarrow[\cong]{\cup\beta^k}\textup{Gr}^W_{2\dim X+2k} H^*(X,\BQ),~~k\ge0.
\end{equation} 

Since the curious hard Lefschetz symmetry is rarely satisfied, it is a good indicator to suggest potential $P=W$ phenomena. Speyer and Lam proved that the even-dimensional Louise type cluster varieties of full rank satisfy the curious hard Lefschetz property (\ref{chl}), which indicates that the $P=W$ identity may hold for such cluster varieties and certain Hitchin-type fibrations. In \cite{Z}, the author proved that for each 2-dimensional cluster variety, the desired fibration does exist, and the $P=W$ identity holds. In this paper, we focus on a special type of higher dimensional cluster varieties, called the isolated cluster variety (see Section 2.1 for definition). Isolated cluster varieties play crucial role in the definition of cluster varieties of Louise type; an effective algorithm of iterated Mayer-Vietoris arguments starting from isolated cluster varieties will calculate cohomology groups of all cluster varieties of Louise type. We give a systematic construction of real Lagrangian fibrations associated with the isolated cluster varieties, and prove that: 

\begin{thm}[Theroem \ref{thm}]
Let $M$ be a full rank $m\times n$ integer matrix, where $m\ge n$. Let $X(M)$ be the isolated cluster variety and $Y(M)\to \BR^{n+m}$ be the associated Lagrangian fibration. Then the $P=W$ identity holds, i.e.
    \[
    P_kH^*(Y(M),\BQ)=W_{2k} H^*(X(M),\BQ)=W_{2k+1}H^*(X(M),\BQ),~~ k\ge 0. 
    \]
\end{thm}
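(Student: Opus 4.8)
The plan is to use the full rank hypothesis to split $X(M)$ and the fibration $Y(M)\to\BR^{n+m}$ into elementary pieces and then invoke the two-dimensional case of \cite{Z}. Since $M$ has rank $n$, its Smith normal form gives unimodular matrices $U\in\mathrm{GL}_m(\BZ)$ and $V\in\mathrm{GL}_n(\BZ)$ with $UMV$ block-diagonal, equal to $\mathrm{diag}(d_1,\dots,d_n)$ stacked on a zero block. Because the construction of an isolated cluster variety and of its mirror Lagrangian fibration only sees the lattice data recorded by the rows and columns of $M$, the unimodular symmetries of $M$ act by isomorphisms, so up to isomorphism one may assume $M$ has this block-diagonal shape. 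In that case each mutation is independent of the others and involves a single mutable and a single frozen coordinate, so both constructions factor as products:
\[
X(M)\cong\Bigl(\textstyle\prod_{i=1}^n X\bigl((d_i)\bigr)\Bigr)\times(\BC^*)^{m-n},\qquad Y(M)\cong\Bigl(\textstyle\prod_{i=1}^n Y\bigl((d_i)\bigr)\Bigr)\times\bigl(T^{m-n}\times\BR^{m-n}\bigr),
\]
compatibly with the Lagrangian fibrations, where $X((d_i))$ is the two-complex-dimensional isolated cluster variety attached to the $1\times 1$ matrix $(d_i)$ and $Y((d_i))\to\BR^2$ is its associated fibration.

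The second step is a Künneth argument. On the Betti side the Künneth isomorphism is compatible with mixed Hodge structures, so $W_\bullet H^*(X(M),\BQ)$ is the convolution of the weight filtrations of the factors; on the Dolbeault side the perverse (Leray) filtration of a product of fibrations is the convolution of the factors' perverse filtrations, $P_kH^*(Y_1\times Y_2,\BQ)=\sum_{a+b=k}P_aH^*(Y_1,\BQ)\otimes P_bH^*(Y_2,\BQ)$. For the torus factors $(\BC^*)^{m-n}$ and $T^{m-n}\times\BR^{m-n}\to\BR^{m-n}$ one checks directly that $P_k=W_{2k}=W_{2k+1}=\bigoplus_{j\le k}H^j$, since $H^j((\BC^*)^{m-n})$ is pure of weight $2j$ and the trivial torus fibration places $H^j(T^{m-n})$ in perverse degree $j$. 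For each factor $\bigl(X((d_i)),\,Y((d_i))\bigr)$ the equality $P_k=W_{2k}=W_{2k+1}$ is exactly the main theorem of \cite{Z}. Convolving these identities over all factors yields $P_kH^*(Y(M),\BQ)=W_{2k}H^*(X(M),\BQ)$ for $k\ge0$, and $W_{2k}=W_{2k+1}$ since every factor carries cohomology of even weight only.

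The step I expect to be the main obstacle is the Künneth formula for the perverse filtration on the Dolbeault side: $Y(M)$ and its base are real-analytic rather than complex algebraic, so the perverse-sheaf formalism and the de Cataldo--Migliorini decomposition theorem are not directly available. I would circumvent this by adopting the flag-filtration description of the perverse filtration---defining $P_\bullet H^*(Y(M),\BQ)$ through restriction to preimages of a generic flag of affine subspaces of $\BR^{n+m}$---which is purely topological and for which multiplicativity under products of base spaces follows formally from the corresponding statement for the Leray filtration; one then checks this flag description agrees with the perverse filtration as defined in Section 2.1. A secondary point requiring care is the first step: one must verify that the construction of Section 2.1 is genuinely functorial for the $\mathrm{GL}_m(\BZ)\times\mathrm{GL}_n(\BZ)$-symmetries of $M$, so that full rank really does force the product decomposition; this is exactly the place where the hypothesis enters and where it would fail without it.

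Should the product decomposition prove too delicate, the fallback is a direct computation: cover both $X(M)$ and $Y(M)$ by torus charts indexed by the Boolean lattice on $\{1,\dots,n\}$ and run the two Mayer--Vietoris spectral sequences. On the Betti side the $E_1$-terms are pure of weight twice the cohomological degree, so the weight spectral sequence degenerates at $E_2$ by strictness of morphisms of mixed Hodge structures; one then proves the analogous $E_2$-degeneration on the Dolbeault side---the hard point, which would follow from relative hard Lefschetz for the Lagrangian fibration---and identifies the two $E_2$-pages through the canonical isomorphism $H^*((\BC^*)^{n+m},\BQ)\cong H^*(T^{n+m},\BQ)\cong\textstyle\bigwedge^*\BQ^{n+m}$, using that a cluster mutation and its tropicalization induce the same automorphism of $H^1$ of the torus.
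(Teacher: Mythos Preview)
Your overall plan---reduce to elementary pieces and then invoke K\"unneth plus the two-dimensional case of \cite{Z}---matches the paper's strategy, but the reduction step has a genuine gap. You assert that the $\mathrm{GL}_m(\BZ)\times\mathrm{GL}_n(\BZ)$-symmetries of $M$ act by isomorphisms on $X(M)$, so that Smith normal form yields a product decomposition. This is true for the row factor $\mathrm{GL}_m(\BZ)$ (a unimodular monomial change of the frozen coordinates $z_k$; cf.\ Proposition~\ref{cover}), but it is \emph{false} for the column factor $\mathrm{GL}_n(\BZ)$: the $j$-th defining equation $x_jy_j=\prod_i z_i^{a_{ij}}+1$ depends on the specific monomial recorded by the $j$-th column, not merely on the column span of $M$. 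Concretely, $M=\bigl(\begin{smallmatrix}1&0\\0&2\end{smallmatrix}\bigr)$ and $M'=\bigl(\begin{smallmatrix}1&1\\0&2\end{smallmatrix}\bigr)$ have the same Smith invariants, yet $X(M)=X(1)\times X(2)$ and $X(M')$ are the two \emph{distinct} connected unramified double covers of $(\BC^2\setminus\{xy=1\})^2$ corresponding to the characters $(0,1)$ and $(1,1)$ of $\pi_1\cong\BZ^2$. So the claimed product $X(M)\cong\prod_i X((d_i))\times(\BC^*)^{m-n}$ does not hold, and the ``secondary point requiring care'' you flagged is in fact the fatal one.

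The paper circumvents this by using row operations only. Proposition~\ref{lemma} produces a full-rank (but typically non-unimodular) integer matrix $T$ with $TM=\binom{dI_n}{0}$ for a single $d$; then $T_X:X\binom{dI_n}{0}\to X(M)$ is a finite \'etale cover with deck group $\Gamma=\CN(T^T)\subset(\BZ/d\BZ)^n$, so that $X(M)\cong X(d)^n/\Gamma\times(\BC^*)^{m-n}$ is a finite \emph{quotient} of a product rather than a product (Corollary~\ref{cor}, Proposition~\ref{3.8}). The missing ingredient in your argument is therefore the compatibility of both filtrations with finite group invariants: $W_k(H^*)^\Gamma=(W_kH^*)^\Gamma$ is automatic since $\Gamma$ acts by algebraic automorphisms, and $P_k(H^*)^\Gamma=(P_kH^*)^\Gamma$ is Proposition~\ref{quotient}. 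With this in place your K\"unneth step goes through. Your worry about K\"unneth for the perverse filtration in the real-analytic setting is well placed; rather than the flag description, the paper handles it by checking that each factor of $h$ separately admits a perverse decomposition in the sense of (\ref{0101})---the K\"ahler factor $Y_d^n/\Gamma\to D^n$ by Saito's theorem, the torus factor by Lemma~\ref{trivial}---which is all Proposition~\ref{kunneth} requires.
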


\subsection{Outline}
In Section 2, we review basic facts about cluster varieties and perverse filtrations, and prove some linear algebra facts which are used later to study the structures of isolated cluster varieties. In Section 3, we construct and study the geometry of the isolated cluster varieties and Lagrangian fibrations, and finally prove the $P=W$ identity.

\subsection{Acknowledgements} 
I thank Thomas Lam, David Speyer, and Xiping Zhang for helpful discussions. I am partially supported by the Fundamental Research Funds for the Central Universities.

\section{Preparatory Results}
\subsection{Cluster varieties}
An extended exchange matrix $B=(b_{ij})$ is a $(n+m)\times n$ matrix with integer coefficients such that the top $n\times n$ minor is skew-symmetric, \emph{i.e.} $b_{ij}=-b_{ij}$ for $1\le i,j\le n$. For $1\le k\le n$, we define $\mu_k(B)=(b'_{ij})$, the mutation of $B$ in the direction $k$, as the following:
\[
b'_{ij}=\begin{cases}
-b_{ij} & \textup{if }i=k\textup{ or }j=k,\\
b_{ij}+b^+_{ik}b^+_{kj}-b^-_{ik}b^-_{ij}& otherwise,
\end{cases}
\]
where $x^+=\max\{x,0\}$ and $x^-=-\min\{x,0\}$.

A seed $t=(\textbf{x},B)$ consists of $n+m$ variables $x_1,\cdots,x_{n+m}$ and an extended exchange matrix matrix $B$. The mutation of seeds $(\textbf{x},B)$ in the direction $1\le k\le n$ is a new seed $\mu_k(t)=(\textbf{x}',\mu_k(B))$, where $x'_i=x_i$ for $i\neq k$ and 
\begin{equation} \label{c1}
x'_k=\frac{\prod_i x_i^{b_{ik}^+}+ \prod_i x_i^{b_{ik}^-}}{x_k}.
\end{equation}
We continue mutating the seeds in all possible directions, producing new seeds. The $(n+m)$-tuples $(x_1,\cdots,x_{n+m})$ obtained in this way are called clusters, and $x_i$ are called cluster variables. The cluster variables $x_1,\cdots,x_n$ change in different clusters, so they are called mutable variables. The cluster variables $x_{n+1},\cdots,x_{n+m}$ are the same in all clusters and are called frozen.

It follows from (\ref{c1}) that any cluster variable obtained from successive mutating from $(\textbf{x},B)$ are in the field $\BC(x_1,\cdots,x_{n+m})$. The $\BC$-subalgebra of $\BC(x_1,\cdots,x_{n+m})$ generated by all cluster variables and inverse of the frozen variables is the cluster algebra $A(\textbf{x},B)$ or simply $A(B)$. The cluster variety is $\CA(\textbf{x},B)=\Spec\,A(\textbf{x},B)$. In general, cluster algebra is not necessarily finitely generated. Various finiteness conditions are introduced in \cite{BFZ,M,LS}. The finiteness conditions are stated more naturally in the language of the graph construction of the cluster varieties, which is equivalent to the construction by matrices. 

Let $\vec{\Gamma}(t)=\vec{\Gamma}(B)$ be a directed graph with $n+m$ vertices $1,\cdots,n+m$, and $b_{ij}$ parallel directed edge $i\to j$ whenever $b_{ij}>0$. We put the cluster variable $x_i$ on vertex $i$. For a directed edge $e:i\to j$, we denote $h(e)=i$ and $t(e)=j$ the head and the tail of the edge. When we mutate the cluster algebra in the direction $k$, we proceed by the following steps:
\begin{enumerate}
    \item Replace $x_k$ by $x'_k=x_k^{-1}(\prod_{h(e)=i}x_{t(e)}+\prod_{t(e)=i}x_{h(e)})$.
    \item For each pair of edges $e,f$ such that $t(e)=h(f)=k$, we add an edge $i\to j$.
    \item Reverse all edges connected to $k$.
    \item Whenever there is a pair of opposite edges, \emph{i.e.} $h(e)=t(f)$ and $h(f)=t(e)$, cancel them. 
\end{enumerate}
It is easy to see that after the mutation in direction $k$ we obtain the graph $\vec{\Gamma}(\mu_k(t))$, as desired.  Let $\vec{\Gamma}_{red}$ be the graph obtained by replacing parallel edges by a single edge. When $\vec{\Gamma}(t)_{red}$ does not contain any directed cycle for some seed $t$, the cluster algebra $A(t)$ is called an acyclic cluster algebra. It is proved in \cite{BFZ} that acyclic cluster algebra is finitely generated.  The following proposition gives a concrete description of acyclic cluster variety, and  shows that for an acyclic variety, mutating each mutable variable once is sufficient to obtain all cluster variables.

\begin{prop}[\cite{BFZ}]
Let $\CA(\textbf{x},B)$ be an acyclic cluster variety, with extended exchange matrix $B=(b_{ij})_{(n+m)\times n}$. Then $\CA(\textbf{x},B)$ is the $(n+m)$-dimensional subvariety of $\BC^{2n}\times (\BC^*)^m$ cut out by the equations
\[
x_jx_j'=\prod_{i=1}^{n+m} x_i^{b_{ij}^+}+\prod_{i=1}^{m+n} x_i^{b_{ij}^-},~~ 1\le j\le n.
\]
\end{prop}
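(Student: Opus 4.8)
The plan is to follow the upper/lower bound method of Berenstein--Fomin--Zelevinsky. Since mutation-equivalent seeds define the same cluster algebra, we may assume the acyclic seed is the initial one $t=(\textbf{x},B)$: by definition $\vec\Gamma(t)_{red}$ has no directed cycle on the mutable vertices, so after relabelling $\{1,\dots,n\}$ by a topological order we may arrange that $b_{ij}>0$ implies $i<j$ for $1\le i,j\le n$. Inside the ambient field $\BC(x_1,\dots,x_{n+m})$ I would introduce the \emph{lower bound}
\[
\CL=\BC\bigl[\,x_1,\dots,x_{n+m},\,x_1',\dots,x_n',\,x_{n+1}^{-1},\dots,x_{n+m}^{-1}\,\bigr],
\]
where $x_j'$ is the once-mutated variable of (\ref{c1}), and the \emph{upper bound}
\[
\CU=\BC[x_1^{\pm1},\dots,x_{n+m}^{\pm1}]\cap\bigcap_{k=1}^{n}\BC\bigl[x_1^{\pm1},\dots,(x_k')^{\pm1},\dots,x_{n+m}^{\pm1}\bigr],
\]
the intersection of the Laurent rings of the initial cluster and its $n$ neighbours, always with the frozen variables inverted. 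The inclusions $\CL\subseteq A(\textbf{x},B)\subseteq\CU$ are immediate: the first because each $x_j'$ is a cluster variable, the second by the Laurent phenomenon, which places every cluster variable in each of these $n+1$ Laurent rings. It then suffices to prove $\CL=\CU$ and to identify $\CL$ with the coordinate ring of the claimed subvariety.

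The next step is the standard monomial basis of $\CL$. Call a monomial in $x_1,\dots,x_{n+m},x_1',\dots,x_n'$ \emph{standard} if for no index $j\le n$ do both $x_j$ and $x_j'$ occur in it. Repeatedly substituting $x_jx_j'=\prod_i x_i^{b_{ij}^+}+\prod_i x_i^{b_{ij}^-}$ shows that every element of $\CL$ is a $\BC$-linear combination of standard monomials times integer powers of frozen variables. The place where acyclicity is used is the $\BC$-linear independence of the standard monomials: ordering the variables by the topological order and passing to leading terms for the $\BZ^n$-valuation recording the exponents of $x_1,\dots,x_n$ in the initial cluster, one checks, using $b_{ij}>0\Rightarrow i<j$, that distinct standard monomials have distinct leading exponents, so no nontrivial relation can hold. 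Hence the standard monomials form a $\BC$-basis of $\CL$.

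The technical heart is the reverse inclusion $\CU\subseteq\CL$: given $f\in\CU$, write it as a Laurent polynomial in the initial cluster; membership in each of the $n$ neighbouring Laurent rings bounds the exponents of $x_1,\dots,x_n$ from below, and an induction on the support of $f$ along the acyclic order rewrites $f$ as a $\BC$-combination of standard monomials, so $f\in\CL$. This is the step that genuinely requires acyclicity, and it is the one I expect to be the main obstacle; it is precisely the content of the structure theorem for acyclic upper cluster algebras. Combined with the easy inclusions it yields $\CL=A(\textbf{x},B)=\CU$, and in particular $A(\textbf{x},B)$ is generated by $x_1,\dots,x_{n+m},x_1',\dots,x_n'$ together with the frozen inverses, so a single round of mutations already produces all cluster variables. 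Finally, let $V\subset\BC^{2n}\times(\BC^*)^m$ be the closed subscheme cut out by the $n$ equations $x_jx_j'=\prod_i x_i^{b_{ij}^+}+\prod_i x_i^{b_{ij}^-}$, with the $2n$ coordinates $x_1,\dots,x_n,x_1',\dots,x_n'$ on the $\BC^{2n}$ factor and the frozen variables on $(\BC^*)^m$. Sending each coordinate function to the corresponding element of $\CL$ defines a surjection $\BC[V]\twoheadrightarrow\CL$, which is injective because the defining equations already reduce every monomial of $\BC[V]$ to a combination of standard monomials, and the latter are independent in $\CL$ by the previous paragraph. Thus $\BC[V]\cong\CL=A(\textbf{x},B)$ is a domain, and $\CA(\textbf{x},B)=\Spec A(\textbf{x},B)=V$, which is the irreducible $(n+m)$-dimensional subvariety claimed, since $\dim V=(2n+m)-n=n+m$.
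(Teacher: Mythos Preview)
The paper does not give its own proof of this proposition; it is stated as a citation from \cite{BFZ} and used as a black box. Your proposal is precisely a sketch of the original Berenstein--Fomin--Zelevinsky argument (lower bound $\CL$, upper bound $\CU$, standard monomial basis, and the acyclic equality $\CL=\CU$), so you are reconstructing the cited proof rather than comparing against anything the present paper does.

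As an outline of the BFZ proof your sketch is accurate, and you are honest that the inclusion $\CU\subseteq\CL$ is the substantive step you have not actually carried out. That step is the real content (it is the main theorem of \cite{BFZ} in this setting), and your one-sentence description (``an induction on the support of $f$ along the acyclic order'') hides a nontrivial argument: one must control, for each $k$, the negative $x_k$-exponents appearing after rewriting in the neighbouring cluster, and the acyclic order is what guarantees the rewriting terminates without reintroducing negative exponents already eliminated. If you intend this as a proof rather than a roadmap, that is the gap to fill. The final identification $\BC[V]\cong\CL$ is fine once the standard monomial basis is established: spanning in $\BC[V]$ comes from the relations, independence is inherited from $\CL$, so the surjection has trivial kernel.
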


When we freeze a mutable variable $x_i$ of a given cluster variety, then by definition we obtain the cluster algebra $A_{\{i\}}:=A[x_i^{-1}]$. Similarly, we can freeze any subset $S\subset \{1,\cdots,n\}$ to get the cluster algebra $A_S$ and the cluster variety $\CA_S$. Let $e:i\to j$ be an edge connecting two mutable vertices. Then $e$ is called a separating edge if $e$ does not fit into a bi-infinite directed path of edges. It is easy to see that every edge in an acyclic graph is a separating edge. 

\begin{prop}[\cite{M}]
Suppose $e:i\to j$ is a separating edge. Then $x_i$ and $x_j$ are not simultaneously zero, i.e. the cluster variety $\CA$ is covered by $\CA_{\{i\}}$ and $\CA_{\{j\}}$.
\end{prop}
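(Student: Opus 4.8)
The plan is to prove the statement directly by contradiction: assuming a point $p\in\CA$ with $x_i(p)=x_j(p)=0$, I will manufacture a bi-infinite directed walk of edges through $e$, contradicting the hypothesis that $e$ is separating. The only ingredient is that for every mutable vertex $k$ the exchange relation
\[
x_kx_k'=\prod_{\ell=1}^{n+m}x_\ell^{\,b_{\ell k}^+}+\prod_{\ell=1}^{n+m}x_\ell^{\,b_{\ell k}^-}
\]
is an identity in the cluster algebra $A$ — the mutated variable $x_k'$ is again a cluster variable, hence regular on $\CA$ — so it may be evaluated at $p$.

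First I record a \emph{forward step}: if $a\to b$ is an edge between mutable vertices with $x_a(p)=x_b(p)=0$, then there is a mutable vertex $c\notin\{a,b\}$ with an edge $b\to c$ and $x_c(p)=0$. Indeed $b_{ab}>0$, so $x_a^{\,b_{ab}}$ divides the monomial $\prod_\ell x_\ell^{\,b_{\ell b}^+}$, which therefore vanishes at $p$; since $x_b(p)=0$ kills the left-hand side of the exchange relation at $b$, we get $\prod_\ell x_\ell(p)^{\,b_{\ell b}^-}=0$. A product of complex numbers with nonnegative integer exponents vanishes only when some positively-exponentiated factor vanishes, so there is a vertex $c$ with $b_{cb}<0$ and $x_c(p)=0$; such $c$ is mutable (frozen variables are units), and $b_{bc}=-b_{cb}>0$ gives the edge $b\to c$, while $b_{ab}>0$ and $b_{bb}=0$ force $c\neq a,b$. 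Symmetrically, a \emph{backward step} shows: under the same hypotheses there is a mutable vertex $c'\notin\{a,b\}$ with an edge $c'\to a$ and $x_{c'}(p)=0$; here one uses $b_{ba}<0$, so $x_b^{\,b_{ab}}$ divides $\prod_\ell x_\ell^{\,b_{\ell a}^-}$, and the exchange relation at $a$ forces $\prod_\ell x_\ell(p)^{\,b_{\ell a}^+}=0$.

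Starting from $e:i\to j$ and iterating the forward step yields an infinite directed walk $i\to j\to\ell_1\to\ell_2\to\cdots$, and iterating the backward step yields an infinite directed walk $\cdots\to h_2\to h_1\to i$, all of whose vertices lie in the common zero locus of their cluster variables at $p$; concatenating them exhibits $e$ inside a bi-infinite directed walk of edges, which is impossible if $e$ is separating. Hence no point of $\CA$ has $x_i$ and $x_j$ simultaneously zero. Since $\CA_{\{i\}}=\Spec A[x_i^{-1}]$ and $\CA_{\{j\}}=\Spec A[x_j^{-1}]$ are precisely the loci in $\CA=\Spec A$ where $x_i$, resp. $x_j$, is invertible, this says $V(x_i)\cap V(x_j)=\varnothing$, i.e. $\CA=\CA_{\{i\}}\cup\CA_{\{j\}}$. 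The only point needing care — the main obstacle, such as it is — is the sign bookkeeping in the forward and backward steps: tracking which of the two exchange monomials each neighbouring variable sits in, and checking each newly produced vertex is genuinely mutable and distinct from its predecessor. Note that finiteness of the graph is never invoked: the walk is literally bi-infinite by construction, so the argument applies verbatim beyond the acyclic case.
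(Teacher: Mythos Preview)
The paper does not supply its own proof of this proposition; it is quoted from Muller~\cite{M} without argument. Your proof is correct and is essentially the standard one: evaluate the exchange relation $x_kx_k'=\prod_\ell x_\ell^{b_{\ell k}^+}+\prod_\ell x_\ell^{b_{\ell k}^-}$ at each successive mutable vertex to propagate the vanishing one step forward and one step backward, using that $x_k'$ is a cluster variable (hence regular on $\CA$) and that frozen variables are units (hence the vanishing factor is always mutable). The sign bookkeeping and the check that each new vertex differs from its two immediate predecessors are handled correctly.

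The one point worth tightening is purely terminological: what you build is a bi-infinite directed \emph{walk}, not a simple path, since vertices may recur. In a finite directed graph there are no bi-infinite simple paths at all, so the phrase ``bi-infinite directed path of edges'' in the definition of separating edge must be read as ``walk''; your construction then furnishes exactly the forbidden object. You might say this explicitly rather than leave it implicit.
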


In \cite{LS}, Lam and Speyer introduce a weaker condition for cluster variety to be of finite type, called the Louise property. A cluster variety satisfies the Louise property if either 
\begin{enumerate}
   \item There are no edges among mutable vertices in $\vec{\Gamma}(t)_{red}$, or
   \item There exists a separating edge $e:i\to j$ such that the cluster varieties $\CA_{\{i\}}$, $\CA_{\{j\}}$, and $\CA_{\{i,j\}}$ all satisfy the Louise property. 
\end{enumerate}

When condition (1) of the Louise property  holds, the cluster variety is called an isolated cluster variety. It is obvious that isolated cluster varieties are acyclic, and acyclic cluster varieties always satisfy the Louise property. We remark that the recursive nature of the definition implies that cluster varieties satisfying Louise property are obtained as inductively glueing various isolated cluster varieties.

\subsection{Perverse filtrations}
Let $Y$ be a real analytic manifold. Let $D^b_c(Y)$ be the bounded derived category of constructible sheaves of $\BQ$-vector spaces on $Y$. The perverse $t$-structure on $D^b_c(Y)$ \footnote{We choose the perversity function as $p(n)=\ceil{-n/2}$, which matches the middle perversity when $Y$ is an complex algebraic variety.} has a perverse truncation functor 
\[
^p\tau_{\le k}:D^b_c(Y)\to D^b_c(Y)
\]
and a natural morphism
\[
^p\tau_{\le k}K\to K
\]
in $D^b_c(Y)$. Let $f:X\to Y$ be a proper morphism between real analytic manifolds. Then there is a natural morphism
\[
^p\tau_{\le k}Rf_*\BQ_X\to Rf_*\BQ_X, ~~k\in\BZ.
\]
Taking the hypercohomology yields a map
\begin{equation} \label{p}
\BH^d(Y,{^p\tau}_{\le k}Rf_*\BQ_X)\to \BH^d(Y,Rf_*\BQ_X)=H^d(X,\BQ)
\end{equation}
we define the perverse filtration $P_kH^d(X,\BQ)$ the image of (\ref{p}). The perverse filtration is an increasing filtration. Similar to \cite[Section 1.4.1]{dCHM}, we shift the indices of the perverse filtration such that for any proper morphism $f:X\to Y$ with equidimensional fiber, the fundamental class $1\in P_0H^0(X)$.

If a nonzero class $\alpha\in H^*(X,\BQ)$ satisfies $\alpha\in P_kH^*(X,\BQ)$ and $\alpha\not\in P_{k-1}H^*(X,\BQ)$, we denote $\Fp(\alpha)=k$.

We say a proper morphism $f:X\to Y$ admits a perverse decomposition if there exists suitable perverse sheaves $\CP_j$ on $Y$ such that 
\begin{equation}\label{0101}
Rf_*\BQ_X=\bigoplus_j \CP_j[-j].
\end{equation}
Such a decomposition always exists for proper morphisms of K\"ahler manifolds \cite[Theorem 0.6]{S}, but not for real analytic manifolds. When a perverse decomposition exists, then the perverse filtration can be computed as
\[
P_kH^*(X,\BQ)\cong\BH^*\left(\bigoplus_{j\le k}\CP_j[-j]\right).
\]
We remark that the perverse filtration is canonically defined, but perverse decomposition is not canonical. For proper morphisms which admit perverse decompositions, the K\"unneth property holds for the perverse filtration associated with the Cartesian product:

\begin{prop} \label{kunneth}
Let $f_i:X_i\to Y_i, 1\le i\le n$ be proper morphisms between real analytic manifolds. Suppose for each $i$, $f_i$ admits a perverse decomposition, then the perverse filtration of the Cartesian product $f:X_1\times\cdots\times X_n\to Y_1\times\cdots\times Y_n$ is
\[
P_kH^*(X_1\times\cdots\times X_n,\BQ)=\langle\alpha_1\boxtimes\cdots\boxtimes\alpha_n\mid \Fp(\alpha_1)+\cdots+\Fp(\alpha_n)\le k\rangle.
\]
\end{prop}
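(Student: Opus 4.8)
The plan is to reduce to the case $n=2$ by a straightforward induction on the number of factors, and then to build a perverse decomposition of $f:=f_1\times f_2$ out of those of $f_1$ and $f_2$ by taking external tensor products of perverse sheaves. First, the Künneth formula for proper pushforward gives $Rf_*\BQ_{X_1\times X_2}=Rf_{1*}\BQ_{X_1}\boxtimes Rf_{2*}\BQ_{X_2}$, with no derived correction terms since we use $\BQ$-coefficients. Substituting the given perverse decompositions $Rf_{i*}\BQ_{X_i}=\bigoplus_j\CP^{(i)}_j[-j]$ and distributing the external product over the direct sums yields
\[
Rf_*\BQ_{X_1\times X_2}=\bigoplus_j\Big(\bigoplus_{j_1+j_2=j}\CP^{(1)}_{j_1}\boxtimes\CP^{(2)}_{j_2}\Big)[-j].
\]

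Next I would invoke the fact that the external tensor product of two perverse sheaves is again perverse. For the middle perversity on complex algebraic varieties this is classical, and the proof — additivity of the support and cosupport dimension bounds under products of (real analytic) stratifications, combined with the vanishing of higher $\mathrm{Tor}$ over the field $\BQ$ — carries over to the real analytic setting with perversity $p(n)=\ceil{-n/2}$. Hence each $\CQ_j:=\bigoplus_{j_1+j_2=j}\CP^{(1)}_{j_1}\boxtimes\CP^{(2)}_{j_2}$ is perverse, so the display above is a perverse decomposition of $f$. One also has to check that, under the index normalization fixed in Section 2.2, the summand $\CQ_j$ sits in perverse degree exactly $j$; this is routine bookkeeping with shifts, using that the fibers of $f_1\times f_2$ are equidimensional of dimension $e_1+e_2$ when those of $f_i$ are equidimensional of dimension $e_i$, and that the normalizing shift is additive in the fiber dimension.

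With the perverse decomposition $Rf_*\BQ_{X_1\times X_2}=\bigoplus_j\CQ_j[-j]$ in hand, I would compute
\[
P_kH^*(X_1\times X_2,\BQ)=\BH^*\Big(\bigoplus_{j\le k}\CQ_j[-j]\Big)\cong\bigoplus_{j_1+j_2\le k}\BH^*\big(\CP^{(1)}_{j_1}[-j_1]\big)\otimes\BH^*\big(\CP^{(2)}_{j_2}[-j_2]\big),
\]
where the second isomorphism is the topological Künneth formula $\BH^*(Y_1\times Y_2,A\boxtimes B)\cong\BH^*(Y_1,A)\otimes\BH^*(Y_2,B)$ for $\BQ$-sheaves. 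Choosing bases of $H^*(X_1,\BQ)$ and $H^*(X_2,\BQ)$ adapted to their perverse filtrations, a basis class $\alpha_i$ with $\Fp(\alpha_i)=j_i$ represents a generator of a chosen lift of $\BH^*(\CP^{(i)}_{j_i}[-j_i])$; therefore the box products $\alpha_1\boxtimes\alpha_2$ with $\Fp(\alpha_1)+\Fp(\alpha_2)\le k$ span the right-hand side, i.e.\ span $P_kH^*(X_1\times X_2,\BQ)$. Conversely, expanding arbitrary $\alpha_1,\alpha_2$ in the adapted bases shows $\alpha_1\boxtimes\alpha_2\in P_k$ whenever $\Fp(\alpha_1)+\Fp(\alpha_2)\le k$, so the two spans coincide; since the statement only involves the intrinsic classes $\alpha_i$ and their perverse levels, it does not depend on the chosen decompositions. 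Feeding this back into the induction on $n$ produces the stated formula.

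The step I expect to be the main obstacle is the second one: verifying that external products of perverse sheaves remain perverse for the real analytic perverse $t$-structure used here, rather than for the classical algebraic middle perversity, and, entwined with it, checking that the paper's index normalization of the perverse filtration is compatible with Cartesian products so that $\Fp$ is exactly additive on box products and not merely subadditive. The remaining steps are formal consequences of the two Künneth formulas together with the existence of the perverse decompositions.
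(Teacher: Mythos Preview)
Your proposal is correct and is essentially the argument the paper has in mind: the paper's proof consists of a single sentence citing \cite[Proposition 2.1]{Z1} and observing that the argument there depends only on the existence of a perverse decomposition for each factor, hence transfers to the real analytic setting. What you have written is precisely the content of that cited argument---K\"unneth for $Rf_*$, distributing the given decompositions over $\boxtimes$, exterior products of perverse sheaves being perverse, and then K\"unneth in hypercohomology---so there is no methodological difference. The concern you flag about $\boxtimes$ preserving perversity for the perversity $p(n)=\lceil -n/2\rceil$ (where additivity can fail on strata of odd real dimension in both factors) is legitimate and is not addressed in the paper either; it is absorbed into the blanket claim that the quasi-projective proof ``applies in our situation''.
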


\begin{proof}
    The proof of \cite[Proposition 2.1]{Z1}, which is originally for proper morphisms of quasi-projective varieties, depends only on the existence of a perverse decomposition for each factor, and hence applies in our situation. 
\end{proof}

\begin{rmk}
   It's the author's ignorance whether the existence of perverse decomposition is necessary. Nevertheless, K\"unneth property under this restriction is enough for the purpose of this paper.
\end{rmk}

\begin{prop} \label{quotient}
Let $f:X\to Y$ be a proper morphism between real analytic manifolds. Let $G$ be a finite abelian group acting on $X$ properly discontinuously by analytic diffeomorphisms. Suppose $f:X\to Y$ descends to the quotient $X/G\to Y$. Then under the natural isomorphism
\[
H^*(X/G,\BQ)\cong H^*(X,\BQ)^G,
\]
the perverse filtration is identified as
\[
P_kH^*(X/G,\BQ)\cong (P_kH^*(X,\BQ))^G,
\]
\end{prop}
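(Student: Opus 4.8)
The plan is to exhibit $R\bar f_*\BQ_{X/G}$ as the $G$-invariant direct summand of $Rf_*\BQ_X$ on $Y$, and then to check that perverse truncation and hypercohomology both respect this splitting together with the $G$-action. Write $\pi\colon X\to X/G$ for the quotient map and $\bar f\colon X/G\to Y$ for the induced (again proper) morphism, so that $f=\bar f\circ\pi$. Since $\pi$ is finite we have $R\pi_*\BQ_X=\pi_*\BQ_X$, and since $|G|$ is invertible in $\BQ$ the averaging endomorphism $e=\frac1{|G|}\sum_{g\in G}g$ is an idempotent of $\pi_*\BQ_X$ with image $\BQ_{X/G}$; hence $\pi_*\BQ_X=\BQ_{X/G}\oplus\CF$ in $D^b_c(X/G)$ for $\CF=(1-e)\pi_*\BQ_X$, $G$-equivariantly, with $G$ acting trivially on the first summand. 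Pushing forward by $R\bar f_*$ and using $\bar f\circ\pi=f$, this becomes a $G$-equivariant decomposition $Rf_*\BQ_X=R\bar f_*\BQ_{X/G}\oplus R\bar f_*\CF$ in $D^b_c(Y)$ — the $G$-action being the one induced by the diffeomorphisms $g\colon X\to X$, which satisfy $f\circ g=f$ because $f$ descends to $X/G$ — in which $R\bar f_*\BQ_{X/G}$ is exactly the image of the averaging idempotent and carries the trivial $G$-action.

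I would then apply ${}^p\tau_{\le k}$. It is additive, it commutes with finite direct sums (the direct sum of two truncation triangles is again one), and the canonical morphism ${}^p\tau_{\le k}K\to K$ is functorial in $K$, hence compatible with any $G$-action on $K$. Consequently
\[
{}^p\tau_{\le k}Rf_*\BQ_X\ =\ {}^p\tau_{\le k}R\bar f_*\BQ_{X/G}\ \oplus\ {}^p\tau_{\le k}R\bar f_*\CF
\]
as $G$-equivariant objects mapping compatibly to $Rf_*\BQ_X$, the first summand being the image of the truncated averaging idempotent and carrying the trivial action. Now take $\BH^d(Y,-)$, again additive and sending the averaging idempotent to the projector onto $G$-invariants. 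We obtain a $G$-equivariant $\BQ$-linear map
\[
\phi_k\colon\BH^d(Y,{}^p\tau_{\le k}Rf_*\BQ_X)\longrightarrow H^d(X,\BQ),\qquad \operatorname{im}\phi_k=P_kH^d(X,\BQ),
\]
whose restriction to the invariant summand $\BH^d(Y,{}^p\tau_{\le k}R\bar f_*\BQ_{X/G})$ is, by naturality of the truncation morphism, precisely the map (\ref{p}) defining $P_kH^d(X/G,\BQ)$, with image inside $H^d(X/G,\BQ)=H^d(X,\BQ)^G$. Since that invariant summand equals the $G$-invariant subspace of the source of $\phi_k$, and since for a $G$-equivariant $\BQ$-linear map of $\BQ[G]$-modules one has $\phi_k(V^G)=(\phi_k V)^G$ (by averaging), it follows that
\[
P_kH^d(X/G,\BQ)\ =\ \phi_k\!\left(\BH^d(Y,{}^p\tau_{\le k}R\bar f_*\BQ_{X/G})\right)\ =\ \big(P_kH^d(X,\BQ)\big)^G .
\]
The index normalization of Section 2.2 is harmless here: $G$ acts along the fibers of $f$, so $f$ and $\bar f$ have equal relative dimension over $Y$ and receive the same shift, and $1_{X/G}\mapsto 1_X$.

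The only point that really needs care — the main obstacle, such as it is — is the structural input that $\pi_*\BQ_X$ splits off $\BQ_{X/G}$ as its invariant summand, i.e. $(\pi_*\BQ_X)^G\cong\BQ_{X/G}$; this is the classical transfer statement for finite quotients with $\BQ$-coefficients, valid because $|G|$ is invertible, and it simultaneously underlies the identification $H^*(X/G,\BQ)=H^*(X,\BQ)^G$. One should also note that, when the $G$-action has fixed points, $X/G$ is a real analytic orbifold rather than a manifold; this is immaterial, as the constructible derived category, the perverse $t$-structure, and all of the above apply verbatim over such a base. Abelianness of $G$ plays no role.
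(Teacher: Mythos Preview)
Your argument is correct and follows essentially the same route as the paper: split $\pi_*\BQ_X=\BQ_{X/G}\oplus\CF$ on $X/G$, push forward to $Y$, apply the additive truncation ${}^p\tau_{\le k}$ to each summand, and read off the invariants in hypercohomology. The only cosmetic difference is that the paper obtains the splitting via the character decomposition $\pi_*\BQ_X=\bigoplus_{\chi\in\hat G}L_\chi$ (using that $G$ is abelian), whereas you use the averaging idempotent $e=\frac{1}{|G|}\sum_g g$ directly; as you note, this makes the abelian hypothesis unnecessary, but otherwise the two proofs are the same.
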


\begin{proof}
Consider the covering map $h:X\to X/G$. Since $h$ is finite, $Rh_*$ is just the ordinary pushforward of sheaves. Then the $G$ action on $h_*\BQ_X$ induces a decomposition indexed by characters of $G$
\begin{equation}\label{a1}
h_*\BQ_X=\bigoplus_{\chi\in\hat{G}}L_\chi,
\end{equation}
and the trivial character corresponds to the quotient $\BQ_{X/G}=(h_*\BQ_X)^G$. We write
\[
h_*\BQ_X=\BQ_{X/G}\oplus F.
\]
Then apply pushforward along $f'$ and perverse truncation functors
\[
\begin{tikzcd}
^p\tau_{\le k}Rf_*\BQ_X\arrow[r,equal]\arrow[d] &^p\tau_{\le k}Rf'_*\BQ_{X/G}\oplus {^p\tau_{\le k}}Rf'_*F \arrow[d]\\
Rf_*\BQ_X\arrow[r,equal]& f'_*\BQ_{X/G}\oplus f'_*F.
\end{tikzcd}
\]
Therefore 
\[
\begin{tikzcd}
  P_kH^*(X,\BQ)\arrow[r,equal]\arrow[d]&P_kH^*(X/G,\BQ)\oplus P_kH^*(Y,Rf'_*F)\arrow[d]\\
  H^*(X,\BQ)\arrow[r,equal]&H^*(X/G,\BQ)\oplus H^*(X/G,F)\\
\end{tikzcd}
\]
Since $F$ is the direct sum of nontrivial characters in the decomposition (\ref{a1}), $H^*(X/G,F)^G=0$. So we have $P_kH^*(X/G,\BQ)=(P_kH^*(X,\BQ))^G$.
\end{proof}

\subsection{Some linear algebra}
In this section, we include some facts in linear algebra which will be used later.

Let $T=(t_{ij})_{1\le i,j\le m}$ be a $m\times m$ integer matrix of full rank.  Define
\[
\begin{array}{cccc}
\CP(T):&(\BC^*)^m &\to& (\BC^*)^m\\ 
 &(\lambda_1,\cdots,\lambda_m) &\mapsto&\displaystyle\left(\prod_{j=1}^m\lambda_j^{t_{1j}},\cdots,\prod_{j=1}^m\lambda_j^{t_{mj}}\right).
\end{array}
\]
Let $\CN(T)$ be the kernel of $\CP(T)$. We have 

\begin{prop}
Let $T=(t_{ij})_{1\le i,j\le m}$ be an $m\times m$ integer matrix. Then $\CN(T)$ is isomorphic to $\BZ^m/T\BZ^m$. In particular, the abelian group $\CN(T)$ is finite if and only if $T$ is of full rank.
\end{prop}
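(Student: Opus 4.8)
The plan is to exhibit an explicit short exact sequence of abelian groups
\[
0 \ra \CN(T) \ra (\BC^*)^m \xrightarrow{\CP(T)} (\BC^*)^m
\]
and identify the kernel with $\BZ^m/T\BZ^m$. The cleanest route is via the exponential/cocharacter description of the torus. Write $(\BC^*)^m = \BC^m/(2\pi i\BZ)^m$ via the coordinate-wise exponential $\exp$, or equivalently work with the cocharacter lattice. I would first observe that $\CP(T)$ is precisely the map on tori induced by the lattice homomorphism $T:\BZ^m\to\BZ^m$ (acting on cocharacters), so $\CP(T) = \mathrm{Hom}(\BZ^m, T)\otimes\BC^*$ under the canonical identification $(\BC^*)^m = \mathrm{Hom}(\BZ^m,\BC^*) = \BZ^m\otimes_\BZ \BC^*$. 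Applying the right-exact functor $-\otimes_\BZ\BC^*$ to $\BZ^m\xrightarrow{T}\BZ^m\to \BZ^m/T\BZ^m\to 0$ gives right-exactness for free; the point is to get left-exactness of the relevant piece, i.e. to compute the kernel.

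The key step is therefore a $\mathrm{Tor}$ computation. From the exact sequence $0\to T\BZ^m\to\BZ^m\to\BZ^m/T\BZ^m\to 0$, tensoring with $\BC^*$ yields the long exact sequence
\[
\mathrm{Tor}_1^\BZ(\BZ^m/T\BZ^m,\BC^*)\ra T\BZ^m\otimes\BC^*\ra \BZ^m\otimes\BC^*\ra (\BZ^m/T\BZ^m)\otimes\BC^* \ra 0,
\]
and since $\BC^*$ is a divisible abelian group it is injective as a $\BZ$-module, hence $\mathrm{Tor}_1^\BZ(-,\BC^*)=0$ — wait, more directly: $\BC^*$ is divisible, so the multiplication-by-$d$ map on $\BC^*$ is surjective with kernel the group $\mu_d$ of $d$-th roots of unity, which gives $\mathrm{Tor}_1^\BZ(\BZ/d,\BC^*)\cong\mu_d$. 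Thus $\mathrm{Tor}_1^\BZ(\BZ^m/T\BZ^m,\BC^*)$ is exactly the torsion-subgroup-type object that forms the kernel. Concretely, I would instead argue by Smith normal form: write $T = U D V$ with $U,V\in\mathrm{GL}_m(\BZ)$ and $D=\mathrm{diag}(d_1,\dots,d_r,0,\dots,0)$, $r=\mathrm{rank}(T)$. Since $U,V$ induce automorphisms of $(\BC^*)^m$, we get $\CN(T)\cong\CN(D) = \prod_{i=1}^r \mu_{d_i}\times (\BC^*)^{m-r}$, while simultaneously $\BZ^m/T\BZ^m\cong \BZ^m/D\BZ^m = \prod_{i=1}^r\BZ/d_i\BZ\times\BZ^{m-r}$. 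The needed isomorphism $\CN(T)\cong\BZ^m/T\BZ^m$ then follows from the standard identifications $\mu_d\cong\BZ/d\BZ$ and $\BC^*\cong\BQ/\BZ\oplus(\text{a }\BQ\text{-vector space})\cong\BZ$... — here one must be slightly careful: $\BC^*$ is not isomorphic to $\BZ$. So the precise statement is that $\CN(T)$, as an abstract abelian group, is isomorphic to $\BZ^m/T\BZ^m$ only up to replacing the free part $\BZ^{m-r}$ appropriately; but the statement actually claimed — \emph{finiteness of $\CN(T)$ iff $T$ has full rank} — is exactly the clean consequence: $\CN(T)$ is finite iff $m-r=0$ iff $T$ is of full rank, and in that case $\CN(T)\cong\prod\mu_{d_i}\cong\prod\BZ/d_i\BZ = \BZ^m/T\BZ^m$.

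I would organize the writeup as: (1) identify $\CP(T)$ with the torus map induced by $T$ on cocharacter lattices; (2) reduce to Smith normal form, using that $U,V\in\mathrm{GL}_m(\BZ)$ give commuting automorphisms of source and target tori; (3) compute $\CN(D)$ directly coordinate-by-coordinate as $\prod_{i=1}^r\mu_{d_i}\times(\BC^*)^{m-r}$; (4) match this with $\BZ^m/D\BZ^m$ via $\mu_{d_i}\cong\BZ/d_i\BZ$ (canonical up to choice of primitive root, or functorially via $\mathrm{Tor}_1^\BZ(\BZ/d_i\BZ,\BC^*)$), and read off the finiteness criterion from the rank. The only genuinely delicate point is the phrase ``isomorphic'': the functorial statement is $\CN(T)\cong\mathrm{Tor}_1^\BZ(\BZ^m/T\BZ^m,\BC^*)$, which equals $\BZ^m/T\BZ^m$ precisely when the latter is finite (a finite abelian group $A$ satisfies $\mathrm{Tor}_1^\BZ(A,\BC^*)\cong A$ canonically, since $\BC^*[n]=\mu_n$ and $\widehat{A}\cong A$ non-canonically). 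I expect this subtlety — whether the claimed isomorphism is meant functorially or just as abstract groups, and how to phrase it so that both the isomorphism in the finite case and the ``only if'' direction come out cleanly — to be the main thing requiring care, though it is not a serious obstacle once Smith normal form is invoked.
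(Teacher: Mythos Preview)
Your approach is correct but genuinely different from the paper's. The paper uses the exponential short exact sequence $0\to\BZ^m\to\BC^m\to(\BC^*)^m\to 1$ in source and target, notes that $\CP(T)$ is compatible with the linear maps $T_\BZ$ and $T_\BC$, and applies the snake lemma: since $T_\BC$ is an isomorphism, $\CN(T)=\ker\CP(T)\cong\textup{coker}\,T_\BZ=\BZ^m/T\BZ^m$. This is a two-line argument, and it is canonical (no choice of Smith basis), but it silently uses that $T_\BC$ is invertible, i.e.\ that $T$ has full rank; the paper's preamble to the proposition in fact introduces $T$ as a full rank matrix, so this is the intended regime.

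Your Smith normal form argument is more elementary and more explicit: it reduces to the diagonal case and computes $\CN(D)=\prod_{i=1}^r\mu_{d_i}\times(\BC^*)^{m-r}$ versus $\BZ^m/D\BZ^m=\prod_{i=1}^r\BZ/d_i\times\BZ^{m-r}$ directly. The payoff is that you see exactly what happens when $r<m$, and you correctly flag that $\BC^*\not\cong\BZ$, so the literal isomorphism $\CN(T)\cong\BZ^m/T\BZ^m$ fails for non-full-rank $T$; only the finiteness criterion and the full rank isomorphism survive. That observation is accurate and is something the snake lemma proof obscures (it simply does not apply when $T_\BC$ is singular). Your identification of the canonical statement as $\CN(T)\cong\mathrm{Tor}_1^\BZ(\BZ^m/T\BZ^m,\BC^*)$ is also correct and is the right way to phrase the functorial content; for the paper's purposes (where $T$ is always full rank) this collapses to the stated isomorphism. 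In short: the paper's proof is slicker in the case it actually needs, while yours is more robust and diagnoses the edge case precisely.
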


\begin{proof}
By the definition of $\CP(T)$, the following diagram commutes:
\[
\begin{tikzcd}
0\arrow[r]&\BZ^m\arrow[r]\arrow[d,"T_\BZ"]&\BC^m\arrow[r,"\exp(2\pi i\cdot)"]\arrow[d,"T_\BC"]&(\BC^*)^m\arrow[r]\arrow[d,"\CP(T)"]&1\\
0\arrow[r]&\BZ^m\arrow[r]&\BC^m\arrow[r,"\exp(2\pi i\cdot)"]&(\BC^*)^m\arrow[r]&1
\end{tikzcd}
\]
Since $T_\BC$ is an isomorphism, the snake lemma yields an isomorphism $\CN(T)=\textup{coker}\, T_\BZ=\BZ^m/T\BZ^m$, which is finite if and only if $T$ is of full rank.
\end{proof}

\begin{prop} \label{lemma}
Let $M$ be a full rank $m\times n$ matrix with integer entries, where $m\ge n$. Then there exists an integer $d>0$, and two full rank $m\times m$ integer matrices $T$ and $\bar{M}$ such that $T\bar{M}=\left(\begin{array}{cc}dI_n&0\\0&I_{m-n}\end{array}\right)$, where $I_k$ is the identity matrix of $k\times k$, and the first $n$ columns of $\bar{M}$ is $M$.
\end{prop}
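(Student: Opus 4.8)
The plan is to reduce everything to the Smith normal form of $M$. Since $M$ is a full rank $m\times n$ integer matrix, I can write
\[
M=P\begin{pmatrix}\Delta\\0\end{pmatrix}Q,\qquad P\in\mathrm{GL}_m(\BZ),\ Q\in\mathrm{GL}_n(\BZ),
\]
where $\Delta=\mathrm{diag}(d_1,\dots,d_n)$ is the diagonal matrix of elementary divisors and $0<d_1\mid d_2\mid\cdots\mid d_n$ (all nonzero by the full rank hypothesis). I would then set $d:=d_n$, so that $E:=\mathrm{diag}(d/d_1,\dots,d/d_n)$ is an integer matrix satisfying $E\Delta=\Delta E=dI_n$ and $\det E=d^n/(d_1\cdots d_n)\neq0$.

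Next I would build $T$ and $\bar M$ explicitly from the data $P,Q,E$. For $T$, put $A:=Q^{-1}E$, an integer matrix with $\det A=\pm\det E\neq 0$, and define
\[
T:=\begin{pmatrix}A&0\\0&I_{m-n}\end{pmatrix}P^{-1}.
\]
This is an integer matrix of full rank, and since $P^{-1}M=\bigl(\begin{smallmatrix}\Delta Q\\0\end{smallmatrix}\bigr)$ one checks $TM=\bigl(\begin{smallmatrix}A\Delta Q\\0\end{smallmatrix}\bigr)=\bigl(\begin{smallmatrix}Q^{-1}E\Delta Q\\0\end{smallmatrix}\bigr)=\bigl(\begin{smallmatrix}dI_n\\0\end{smallmatrix}\bigr)$, exactly the first $n$ columns of the target matrix. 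For $\bar M$, I would take its last $m-n$ columns to be the last $m-n$ columns of $P$; that is, $\bar M=(M\mid N)$ where $N=P\bigl(\begin{smallmatrix}0\\I_{m-n}\end{smallmatrix}\bigr)$, so by construction the first $n$ columns of $\bar M$ are $M$. Since $TP=\bigl(\begin{smallmatrix}A&0\\0&I_{m-n}\end{smallmatrix}\bigr)$, we get $TN=\bigl(\begin{smallmatrix}0\\I_{m-n}\end{smallmatrix}\bigr)$, hence $T\bar M=\bigl(\begin{smallmatrix}dI_n&0\\0&I_{m-n}\end{smallmatrix}\bigr)$; and then $\det T\cdot\det\bar M=\pm d^n\neq0$ forces $\bar M$ to have full rank. (The case $m=n$ is the same computation with the empty blocks removed.)

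The only genuine subtlety --- and the reason the Smith normal form is exactly the right tool --- is that one must keep $T$ \emph{and} the extra columns of $\bar M$ integral at the same time: a naive completion of $M$ to an invertible rational matrix followed by clearing denominators would not respect the required identity $T\bar M=\bigl(\begin{smallmatrix}dI_n&0\\0&I_{m-n}\end{smallmatrix}\bigr)$. Here integrality is automatic, because the extra columns can be chosen \emph{inside} the unimodular matrix $P$, on which $T$ acts by the block-diagonal integer matrix $\bigl(\begin{smallmatrix}A&0\\0&I_{m-n}\end{smallmatrix}\bigr)$. The choice $d=d_n$ is precisely what makes $E=d\Delta^{-1}$ integral; any common multiple of the elementary divisors would serve equally well.
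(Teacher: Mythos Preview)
Your proof is correct and follows essentially the same construction as the paper's: reduce $M$ by a unimodular matrix to the shape $\binom{*}{0}$, scale the inverse of the top $n\times n$ block by a suitable $d$ to build $T$, and use the last $m-n$ columns of the inverse unimodular matrix to complete $M$ to $\bar M$. The only cosmetic difference is that the paper row-reduces to an upper-triangular $U$ (taking $d$ so that $dU^{-1}$ is integral), whereas you invoke the Smith normal form, which has the pleasant side effect of producing the canonical minimal choice $d=d_n$.
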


\begin{proof}
Since $M$ is of full rank and $m\ge n$, the column vectors of $M$ are linearly independent, and hence there exists a sequence of invertible row operations with integer coefficients to convert $M$ into $\binom{U}{0}$, where $U$ is an upper-triangular integer matrix. Equivalently, there exists a matrix $R\in\textup{SL}(m,\BZ)$ such that $RM=\binom{U}{0}$. Noting that $U$ is a square matrix of full rank, let $d$ be the greatest common divisor of all entries of $U^{-1}$. Now let $T=\left(\begin{array}{cc}dU^{-1}&0\\0&I_{m-n}\end{array}\right)R$ and $\bar{M}=R^{-1}\left(\begin{array}{cc}U&0\\0&I_{m-n}\end{array}\right)$.
\end{proof}

\begin{prop} \label{2.7}
Let $M$, $\bar{M}$, $T$ and $d$ as in Proposition \ref{lemma}. Then $\CN(T^T)$ is naturally a subgroup of $\CN\left(\begin{array}{cc}dI_n&0\\0&I_{m-n}\end{array}\right)=(\BZ/d\BZ)^n$. Furthermore, $\CN(T^T)$ depends only on $M$ and $d$, not on the choice of $T$. 
\end{prop}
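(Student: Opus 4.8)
The plan is to make the "natural subgroup" claim precise via the transpose of the identity $T\bar M = D$, where $D = \left(\begin{smallmatrix} dI_n & 0 \\ 0 & I_{m-n}\end{smallmatrix}\right)$, and then to identify $\CN(T^T)$ explicitly as a quotient of lattices so that its dependence on the choices becomes transparent. Transposing gives $\bar M^T T^T = D$ (note $D^T = D$), hence $T^T = (\bar M^T)^{-1} D$ as rational matrices, i.e. $T^T \BZ^m \subseteq (\bar M^T)^{-1} D\, \BZ^m$. Applying the previous proposition, $\CN(T^T) \cong \BZ^m / T^T \BZ^m$ and $\CN(D) \cong \BZ^m/D\BZ^m \cong (\BZ/d\BZ)^n$; the inclusion $D\BZ^m = \bar M^T T^T \BZ^m \subseteq T^T\BZ^m$ inside $\BZ^m$ (using that $\bar M^T$ has integer entries) induces a surjection $\BZ^m/D\BZ^m \twoheadrightarrow \BZ^m/T^T\BZ^m$. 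To get a \emph{subgroup} rather than a quotient, I would instead use the compatible diagram of the covering maps: $\CP(D) = \CP(\bar M^T)\circ \CP(T^T)$ on $(\BC^*)^m$, so $\CN(T^T) = \ker \CP(T^T) \subseteq \ker \CP(D) = \CN(D)$, which is the desired inclusion of a \emph{subgroup}, and it lands in the $(\BZ/d\BZ)^n$ factor because $\CN(D)$ is supported on the first $n$ coordinates (the last $m-n$ coordinates of $D$ are trivial so impose no condition, but membership in $\CN(T^T)$ does constrain all coordinates — here one checks the image actually sits in the first-$n$ block using that the last $m-n$ rows of $D$ are unimodular).

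For the independence statement, suppose $T_1, T_2$ are two valid choices (with the same $d$, which is forced since $d$ is the gcd of the entries of $U^{-1}$ and $M$ determines $U$ up to $\mathrm{SL}(m,\BZ)$ on the left, hence up to unimodular change which does not affect $d$ — this point should be checked, or $d$ should simply be fixed as part of the data). Then $T_1\bar M_1 = T_2 \bar M_2 = D$ with $\bar M_1, \bar M_2$ agreeing with $M$ in the first $n$ columns. I would show $\CN(T_1^T) = \CN(T_2^T)$ by characterizing this subgroup intrinsically: a point $\zeta = (\zeta_1,\dots,\zeta_m) \in (\BC^*)^m$ lies in $\CN(T_i^T)$ iff $\prod_j \zeta_j^{(T_i^T)_{kj}} = 1$ for all $k$, i.e. iff the vector $(\log \zeta_j / 2\pi i)$ pairs integrally against the rows of $T_i^T$, i.e. against the columns of $T_i$. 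Since $T_i = D(\bar M_i^T)^{-1}$... wait — better: from $T_i \bar M_i = D$ we get that the columns of $T_i$ are the $D$-images of the dual basis to the columns of $\bar M_i$. The subgroup $\CN(T_i^T) \subseteq \CN(D) = (\tfrac1d\BZ^n/\BZ^n) \oplus 0$ should be exactly the image of $\{v \in \BZ^m : \bar M_i v \in D\BZ^m\}$, wait, I need to be careful; the clean formulation is: under $\CN(D) \cong (\BZ/d\BZ)^n$, the subgroup $\CN(T^T)$ corresponds to $\{ \bar w \bmod d : w \in \BZ^n, \exists\, u \in \BZ^{m-n} \text{ with } (w,u) \in \mathrm{im}(\bar M^T \bmod D)\}$, which after unwinding depends only on the lattice $M^T\BZ^m + d\BZ^n \subseteq \BZ^n$ — note this uses only $M$ (the first $n$ rows of $\bar M^T$) and $d$, since the last $m-n$ rows of $\bar M^T$ contribute only through the unimodular block $I_{m-n}$ and can be absorbed.

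I expect the main obstacle to be the second half: pinning down the \emph{exact} description of $\CN(T^T)$ as a subgroup of $(\BZ/d\BZ)^n$ purely in terms of $M$ and $d$, because the extension of $M$ to $\bar M$ and the auxiliary matrix $R \in \mathrm{SL}(m,\BZ)$ from Proposition~\ref{lemma} genuinely enter the formula for $T$, and one must argue their contributions cancel. The cleanest route is probably to show directly that
\[
\CN(T^T) \;=\; \bigl\{\, x \in (\BZ/d\BZ)^n \;:\; x \in \mathrm{image of } \ M^T\!\!: (\tfrac1d\BZ^m/\BZ^m) \to (\tfrac1d\BZ^n/\BZ^n) \,\bigr\},
\]
or equivalently the annihilator of the saturation of $M^T\BZ^n$ modulo $d$, and then observe the right-hand side manifestly involves only $M$ and $d$. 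Verifying this identity amounts to a diagram chase combining the snake-lemma isomorphism of the previous proposition with the factorization $\CP(D) = \CP(\bar M^T) \circ \CP(T^T)$, together with the bijection between the columns of $T$ and a $\BZ$-basis adapted to $\bar M$; once set up, it is a finite linear-algebra verification rather than a conceptual difficulty.
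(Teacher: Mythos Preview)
Your plan is basically sound, and the intended characterization
\[
\CN(T^T)\;\cong\;\bigl(M^T\BZ^m + d\BZ^n\bigr)/d\BZ^n\ \subset\ (\BZ/d\BZ)^n
\]
is correct and immediately gives the independence from $T$. One wrong turn to fix: the line ``$D\BZ^m=\bar M^T T^T\BZ^m\subseteq T^T\BZ^m$'' is false (you cannot pull $\bar M^T$ past $T^T$ like that); the correct lattice statement is that $\bar M^T$ carries $T^T\BZ^m$ onto $D\BZ^m$ and, being injective, induces an \emph{injection} $\BZ^m/T^T\BZ^m\hookrightarrow\BZ^m/D\BZ^m$. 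This is exactly what the paper does, and it is compatible (via the snake-lemma isomorphism) with your kernel inclusion $\ker\CP(T^T)\subseteq\ker\CP(D)$. Once you have that, the image in $\BZ^m/D\BZ^m\cong(\BZ/d\BZ)^n$ is $\bar M^T\BZ^m\bmod D$, and since the first $n$ rows of $\bar M^T$ are $M^T$ while the last $m-n$ coordinates are killed by $I_{m-n}$, this is $M^T\BZ^m\bmod d$, depending only on $M$ and $d$. So your diagram chase is short and does work; you should just write it linearly rather than as an exploration.

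By contrast, the paper proves independence differently: it fixes one $R\in\mathrm{SL}(m,\BZ)$ with $RM=\binom{U}{0}$, writes $TR^{-1}$ in block form, and shows by direct computation that $A_{11}=dU^{-1}$, $A_{21}=0$, and $A_{22}$ is invertible, so the row $\BZ$-span of $TR^{-1}$ (hence of $T$) is $dU^{-1}\BZ^n\oplus\BZ^{m-n}$ regardless of which $T$ was chosen. Your route is more intrinsic (no auxiliary $R$, no block decomposition) and yields a usable explicit description of the subgroup; the paper's route is more hands-on and makes the row span visible, which is what they later use. Either is fine; yours is arguably cleaner once the false start is removed.
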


\begin{proof}
   By Proposition \ref{lemma}, we have $\bar{M}^TT^T=\left(\begin{array}{cc}dI_n&0\\0&I_{m-n}\end{array}\right)$.   So the left multiplications induce natural morphisms 
   \[
   \BZ^m\xrightarrow{T^T}\BZ^m\xrightarrow{\bar{M}^T}\BZ^m
   \]
   which are injections since $T$ and $\bar{M}$ are of full rank. Therefore there is a natural inclusion 
   \[
   \CN(T^T)=\BZ^m/T^T\BZ^m\hookrightarrow \BZ^m/\left(\begin{array}{cc}dI_n&0\\0&I_{m-n}\end{array}\right)\BZ^m=\CN\left(\begin{array}{cc}dI_n&0\\0&I_{m-n}\end{array}\right).
   \]
   To show $\CN(T^T)=\BZ^m/T^T\BZ^m$ depends only on $d$ and $M$, it suffices to show that the $\BZ$-span of the columns of $T^T$ does not depend on the choice of $T$. As in Proposition \ref{lemma}, we choose and fix $R\in \textup{SL}(m,\BZ)$ such that $RM=\binom{U}{0}$, where $U$ is an full rank $n\times n$ integer matrix. Then $T\bar{M}=\left(\begin{array}{cc}dI_n&0\\0&I_{m-n}\end{array}\right)$ implies
   \[
   TR^{-1}\cdot R\bar{M}=\left(\begin{array}{cc}dI_n&0\\0&I_{m-n}\end{array}\right)
   \]
   Let $TR^{-1}=\left(\begin{array}{cc} A_{11}& A_{12}\\A_{21}&A_{22}\end{array}\right)$ and  $R\bar{M}=\left(\begin{array}{cc} U & B_{12}\\0&B_{22}\end{array}\right)$.
   An easy block matrix calculation implies  $A_{11}=dU^{-1}$, $A_{21}=0$, and $B_{22}=A_{22}^{-1}$. In particular, $A_{22}$ is an invertible matrix, and hence the row $\BZ$-span of $A_{22}$ is $\BZ^{m-n}$. Therefore, the row $\BZ$-span of $TR^{-1}$ is the direct sum of row $\BZ$-span of $dU^{-1}$ and $\BZ^{m-n}$. For different choice of $T$, $A_{12}$ and $A_{22}$ may vary but the the row span of $TR^{-1}$ are canonical. Now since $R\in \textup{SL}(n,\BZ)$, the injections 
   \[
   \BZ^m\xrightarrow{T^T}\BZ^m\xrightarrow{(R^T)^{-1}} \BZ^m,
   \]
   induces 
   \[
   \BZ^m/T^T\BZ^m\xrightarrow{\cong}\BZ^m/(R^T)^{-1}T^T\BZ^m\cong \BZ^n/dU^{-1}\BZ^n\oplus\BZ^{m-n}
   \]
   Note that the first isomorphism depends only on the choice of $R$, which is fixed and hence does not depends on the choice of $T$. The second isomorphism follows from the description of column $\BZ$-span of $(TR^{-1})^T$, which equals the row $\BZ$-span of $TR^{-1}$. We conclude that the choice of the $\CN(T)$ does not depend on the choice of $T$. 
\end{proof}

\section{The P=W phenomenon}
In this section, we will construct for a full rank $m\times n$ ($m\ge n$) integer matrix $M$ a real analytic manifold $\CM$ and endow it with two different structures: an algebraic scheme structure to make it a smooth affine variety $X(M)$ and a Lagrangian fibration structure $h:Y(M)\to \BR^{n+m}$ with general fiber a torus. We show will show that the mix Hodge-theoretic weight filtration on $H^*(X(M),\BQ)$ matches the perverse filtration on $H^*(Y(M),\BQ)$ associated with $h:Y(M)\to \BR^{m+n}$.

\subsection{The space $X(M)$}
Let $M=(a_{ij})_{m\times n}$ be a $m\times n$ matrix with integer coefficients. We define $X(M)$ as a subvariety 
\[
\BC^{2n}\times(\BC^*)^m=\Spec \BC[x_1,\cdots,x_n,y_1,\cdots,y_n,z_1^{\pm1},\cdots,z_m^{\pm1}]
\]
defined by equations
\begin{equation} \label{x1}
x_jy_j=\prod_{i=1}^m z_i^{a_{ij}}+1,~~ 1\le j\le n.
\end{equation}

To simplify the notation, when we represent a point $P\in X(M)$ in terms of the coordinates of $\BC^{2n}\times (\BC^*)^m$, we will simply write $(x_i,y_j,z_k)$ instead of $(x_1,\cdots,x_n,y_1,\cdots,y_n,z_1,\cdots,z_m)$. 

It is straightforward from the definition to see that $X(M)$ is a smooth affine variety of dimension $m+n$. In fact, we have

\begin{prop} \label{3.1}
The affine variety $X(M)$ is isomorphic to the isolated cluster variety $\CA\binom{0_{n\times n}}{M}$ defined in Section 2.1.
\end{prop}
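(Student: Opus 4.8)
The plan is to exhibit the defining equations of $X(M)$ as the equations appearing in the BFZ description (the first displayed Proposition of Section 2.1) for the acyclic exchange matrix $B=\binom{0_{n\times n}}{M}$, and then to check that this exchange matrix is indeed isolated. First I would unravel notation: the extended exchange matrix $B=\binom{0_{n\times n}}{M}$ is $(n+m)\times n$, with mutable indices $1,\dots,n$ and frozen indices $n+1,\dots,n+m$. Since the top $n\times n$ block is the zero matrix, there are no edges among the mutable vertices in $\vec\Gamma(B)$, so condition (1) of the Louise property holds and $B$ defines an isolated (in particular acyclic) cluster variety; the general theory of Section 2.1 then applies.

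Next I would invoke the BFZ proposition, which identifies $\CA(\mathbf{x},B)$ with the subvariety of $\BC^{2n}\times(\BC^*)^m$ cut out by $x_j x_j' = \prod_{i=1}^{n+m} x_i^{b_{ij}^+} + \prod_{i=1}^{n+m} x_i^{b_{ij}^-}$ for $1\le j\le n$. For our $B$, the column $(b_{ij})_i$ has entries $b_{ij}=0$ for $1\le i\le n$ and $b_{(n+k)j}=a_{kj}$ for $1\le k\le m$. Splitting the monomials according to the sign of $a_{kj}$, I would rename the frozen variables $x_{n+k}=z_k$, the mutable variables $x_j=x_j$, and their mutations $x_j'=y_j$, so that the BFZ equation becomes
\[
x_j y_j = \prod_{k=1}^m z_k^{a_{kj}^+} + \prod_{k=1}^m z_k^{a_{kj}^-}.
\]
Multiplying numerator and denominator — equivalently, using that $z_k$ is invertible — one can absorb the negative part: dividing both sides by $\prod_k z_k^{a_{kj}^-}$ is not quite it, so more carefully I would instead replace $y_j$ by $y_j \prod_k z_k^{-a_{kj}^-}$, an automorphism of $\BC^{2n}\times(\BC^*)^m$ since the $z_k$ are units, which turns the right-hand side into $\prod_k z_k^{a_{kj}^+ - a_{kj}^-} + 1 = \prod_k z_k^{a_{kj}} + 1$. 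This is exactly equation (\ref{x1}), so the change of coordinates $(x_j, y_j, z_k)\mapsto (x_j, y_j\prod_k z_k^{-a_{kj}^-}, z_k)$ gives an isomorphism $\CA\binom{0_{n\times n}}{M}\xrightarrow{\ \cong\ } X(M)$.

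The only genuinely delicate point is the bookkeeping in the last step: one must check that the substitution $y_j\mapsto y_j\prod_k z_k^{-a_{kj}^-}$ is a well-defined algebra automorphism of $\BC[x_i,y_j,z_k^{\pm1}]$ (it is, being a monomial change of variables in invertible variables) and that it carries the BFZ ideal precisely onto the ideal generated by (\ref{x1}), with no extra components introduced or lost — this is immediate once one notes the substitution is invertible. I would also remark that smoothness and the dimension count $m+n$ are then either inherited from the cluster-variety side or checked directly from the Jacobian of (\ref{x1}), the partial derivatives $\partial/\partial x_j$ and $\partial/\partial y_j$ never vanishing simultaneously on $X(M)$ since their product would force $\prod_k z_k^{a_{kj}}+1=0$ while one of them being nonzero handles the rest. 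I expect no serious obstacle; the content of the proposition is essentially the observation that the "$+1$" normalization in (\ref{x1}) is the BFZ binomial after clearing the Laurent denominators coming from the frozen variables.
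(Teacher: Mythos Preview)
Your proposal is correct and takes essentially the same approach as the paper: both arguments use the monomial change of variables $x_j' = y_j\prod_k z_k^{a_{kj}^-}$ (equivalently your inverse $y_j = x_j'\prod_k z_k^{-a_{kj}^-}$), exploiting the invertibility of the frozen variables $z_k$, to pass between the BFZ presentation $x_jx_j' = \prod z_k^{a_{kj}^+} + \prod z_k^{a_{kj}^-}$ and the normalized equation~(\ref{x1}). Your write-up is more expansive --- you explicitly verify that $B=\binom{0}{M}$ is isolated and comment on smoothness --- but the mathematical content is identical to the paper's three-line proof.
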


\begin{proof}
Let 
\[
x'_i=y_i\cdot\prod_{i=1}^m z_i^{a_{ij}^-},
\]
where the notation $a^+$ and $a^-$ for $a\in\BR$ are defined in Section 2.1. Then 
\[
x_ix'_i=\prod_{i=1}^m z_i^{a_{ij}+ a_{ij}^-}+\prod_{i=1}^m z_i^{a_{ij}^-}=\prod_{i=1}^m z_i^{a_{ij}^+}+\prod_{i=1}^m z_i^{a_{ij}^-}.
\]
So the invertible morphism $(x_i,y_i,z_k)\mapsto (x_i,x'_i,z_k)$ gives an isomorphism $X(M)\cong \CA\binom{0_{n\times n}}{M}$.
\end{proof}

Since the graph defining any isolated cluster variety has no edges among mutable variables, the top $n\times n$ minor of its extended is zero, and hence is of the form $\binom{0}{M}$. Therefore, by abuse of notations, we also call $X(M)$ the isolated cluster variety associated with integer matrix $M$.

The following results concern maps between $X(M)$ for different $M$'s.    Our statements are slightly stronger than \cite[Proposition 5.8, 5.10]{LS} in the case of isolated cluster varieties. For the convenience of the reader, we give a direct proof.

\begin{prop} \label{cover}
Let $M$ and $M'$ be two $m\times n$ integer matrices such that $M'=TM$ where $T=(t_{ij})$ is an full rank $m\times m$ integer matrix. Then there is a natural finite \'etale morphism between the associated cluster varieties $T_X:X(M')\to X(M)$, defined by 
\begin{equation}\label{x2}
\begin{cases}
x_i=x'_i&1\le i\le n,\\
y_j=y'_j&1\le j\le n,\\
\displaystyle z_k=\prod_{l=1}^m (z'_l)^{t_{lk}}&1\le k\le m.
\end{cases}
\end{equation}
The deck transformation group is the finite abelian group $\CN(T^T)$ (defined in Section 2.3) and in particular, $X(M)=X(M')/\CN(T^T)$. 
\end{prop}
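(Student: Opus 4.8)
The plan is to verify directly that the formulas in \eqref{x2} define a morphism $T_X\colon X(M')\to X(M)$, then show it is finite \'etale with the claimed deck group. First I would check that \eqref{x2} maps $X(M')$ into $X(M)$: substituting $z_k=\prod_l (z'_l)^{t_{lk}}$ into the right-hand side of \eqref{x1} for $X(M)$, the monomial $\prod_i z_i^{a_{ij}}$ becomes $\prod_l (z'_l)^{\sum_k t_{lk}a_{kj}} = \prod_l (z'_l)^{(TM)_{lj}} = \prod_l (z'_l)^{a'_{lj}}$, using $M'=TM$; since $x_i=x'_i$ and $y_j=y'_j$, the defining equations $x_jy_j = \prod z_i^{a_{ij}}+1$ pull back exactly to the defining equations of $X(M')$. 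So $T_X$ is well-defined. Note the map on the $z$-coordinates is precisely $\CP(T^T)\colon (\BC^*)^m\to(\BC^*)^m$ in the notation of Section 2.3 (the matrix governing $z_k\mapsto \prod_l (z'_l)^{t_{lk}}$ is the transpose of $T$), together with the identity on the $x,y$ coordinates.

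Next I would establish that $T_X$ is finite \'etale. The key point is that $\CP(T^T)$ is a finite \'etale covering of $(\BC^*)^m$ onto itself: it is a homomorphism of algebraic tori, surjective because $T^T$ has full rank over $\BQ$, with kernel the finite group $\CN(T^T)=\BZ^m/T^T\BZ^m$ (this is exactly the content of the first proposition of Section 2.3). A surjective homomorphism of tori with finite kernel is finite \'etale of degree $|\CN(T^T)|$. Since $T_X$ is the base change of $\CP(T^T)$ along the projection $X(M)\to (\BC^*)^m$, $(x_i,y_j,z_k)\mapsto (z_k)$ — indeed $X(M')$ is the fiber product $X(M)\times_{(\BC^*)^m,\CP(T^T)}(\BC^*)^m$ because the $x,y$-coordinates and equations are untouched and the $z$-equations match as shown above — it follows that $T_X$ is finite \'etale of the same degree $|\CN(T^T)|$.

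Finally I would identify the deck transformation group. The group $\CN(T^T)\subset(\BC^*)^m$ acts on $X(M')$ through its action on the $z'$-coordinates by coordinate-wise multiplication (fixing $x',y'$), and since $\CN(T^T)=\ker\CP(T^T)$ this action preserves the fibers of $T_X$ and is simply transitive on them; hence $\CN(T^T)$ is the deck group and $X(M')/\CN(T^T)\xrightarrow{\cong} X(M)$. One should check the action is well-defined on $X(M')$, i.e. that rescaling $z'_l\mapsto \zeta_l z'_l$ by $(\zeta_l)\in\CN(T^T)$ preserves the equations $x'_jy'_j=\prod_l(z'_l)^{a'_{lj}}+1$; this holds because $\prod_l \zeta_l^{a'_{lj}} = \prod_l\zeta_l^{(TM)_{lj}} = \big(\CP(T^T)(\zeta)\big)$-type expression equals $1$ for all $j$ precisely since $(\zeta_l)\in\ker\CP(T^T)$ — here one uses that the columns of $M'$ are among the rows of $T^T$ up to the full-rank bookkeeping, or more simply that $\CP(T^T)(\zeta)=1$ forces every monomial $\prod_l\zeta_l^{a'_{lj}}$ to be $1$ since these monomials are coordinates of $\CP(T^T)$ composed with inclusion of the column span. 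I expect the main obstacle to be a clean, honest justification of this last well-definedness/transitivity point — getting the indexing of transposes right and confirming the kernel acts correctly on the equation set — rather than anything conceptually deep; the \'etale and fiber-product assertions are formal once the well-definedness is in place.
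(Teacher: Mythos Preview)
Your argument is correct and follows essentially the same path as the paper's proof: verify that the formulas carry the defining equations of $X(M')$ to those of $X(M)$, then identify each fiber as an $\CN(T^T)$-torsor. Your fiber-product packaging of the finite \'etale claim is in fact cleaner than the paper's, which computes the fibers pointwise and then simply declares \'etaleness ``straightforward.''

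The worry you flag at the end is not a real obstacle. For $(\zeta_l)\in\CN(T^T)$ one has $\prod_l\zeta_l^{t_{lk}}=1$ for every $k$, and hence
\[
\prod_l\zeta_l^{a'_{lj}}
=\prod_l\zeta_l^{\sum_k t_{lk}a_{kj}}
=\prod_k\Bigl(\prod_l\zeta_l^{t_{lk}}\Bigr)^{a_{kj}}
=1,
\]
so the action preserves the equations of $X(M')$. More conceptually, this is automatic from your own fiber-product description: $\CN(T^T)$ acts on $X(M')\cong X(M)\times_{(\BC^*)^m,\CP(T^T)}(\BC^*)^m$ through translation on the second factor, which visibly commutes with the projection to $X(M)$ and is free and transitive on fibers. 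No further bookkeeping with transposes is needed.
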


\begin{proof}
Let $M=(a_{ij})$ and $M'=(a'_{ij})$. We first check that (\ref{x2}) gives a morphism $X(M)$. It suffices to check that the defining equations of $X(M')$ is sent to the ones of $X(M)$. In fact,
\[
\begin{split}
\prod_{k=1}^m (z_k)^{a_{kj}}&=\prod_{k=1}^m\left(\prod_{l=1}^m (z'_l)^{t_{lk}}\right)^{a_{kj}}
=\prod_{l=1}^m\prod_{k=1}^m (z'_l)^{t_{lk}a_{kj}}\\
&=\prod_{l=1}^m (z'_l)^{\sum_{k=1}^m t_{lk}a_{kj}}=\prod_{l=1}^m (z'_l)^{a'_{lj}},
\end{split}
\]
as desired.
Let $P=(x_i,y_j,z_k)$ be a point in $X(M)$, and fix one of its preimage $(x_i,y_j,z'_k)\in X(M')$. Then any preimage of $P$ can be written in the form $(x_i,y_j,\lambda_kz'_k)$ since $z'_k\neq0$. Equations (\ref{x2}) imply that the $\lambda_k$'s have to satisfy the equations
\[
\prod_{l=1}^m \lambda_l^{t_{lk}}=1, ~~ 1\le k\le n.
\]
In the notations of Section 2.3, the set of preimages of $P$ is bijective to $\CN(T^T)$.
It is straightforward to see that $X_T$ is an \'etale morphism, as desired.
\end{proof}

\begin{prop} \label{3.3}
Let $M$ be an $m\times n$ integer matrix of full rank, where $m\ge n$. Then there exists an integer $d$ and a full rank $m\times m$ integer matrix $T$ such that $T$ induces a finite \'etale morphism $T_X:X\binom{dI_n}{0}\to X(M)$, and the group of deck transformation of the covering is $\CN(T^T)$ defined in Section 2.3. Concretely, $\CN(T^T)$ is a subgroup of $G\cong(\BZ/d\BZ)^n$, where $G$ consists of automorphisms of $X\binom{dI_n}{0}$ of the type 
\begin{equation}\label{auto}
\begin{cases}
x_i\mapsto x_i & 1\le i\le n\\
y_j\mapsto y_j & 1\le j\le n\\
z_k\mapsto \zeta_k z_k &1\le k\le n\\
\phantom{z_k\mapsto}z_k &n< k\le m 
\end{cases}
\end{equation}
where $\zeta_1,\cdots,\zeta_n$ are $d$-th roots of unity.
\end{prop}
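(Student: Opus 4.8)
The plan is to obtain the matrix $T$ (and the integer $d$) from Proposition \ref{lemma}, turn the ensuing matrix identity into a finite \'etale cover via Proposition \ref{cover}, and pin down the deck group with Proposition \ref{2.7}. Throughout, write $D=\left(\begin{array}{cc}dI_n&0\\0&I_{m-n}\end{array}\right)$ for the $m\times m$ block matrix, so that the $m\times n$ matrix $\binom{dI_n}{0}$ is exactly the first $n$ columns of $D$. First I would apply Proposition \ref{lemma} to $M$, producing $d>0$ and full rank $m\times m$ integer matrices $T,\bar M$ with $T\bar M=D$ and with $M$ equal to the first $n$ columns of $\bar M$. Comparing the first $n$ columns on both sides of $T\bar M=D$ gives $TM=\binom{dI_n}{0}$. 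Now set $M'=\binom{dI_n}{0}$; then $M'=TM$, so Proposition \ref{cover} applies and produces a natural finite \'etale morphism $T_X\colon X\binom{dI_n}{0}\to X(M)$ with deck transformation group $\CN(T^T)$. This establishes the existence claim with the chosen $d$ and $T$.

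Next I would identify $\CN(T^T)$ concretely. By Proposition \ref{2.7}, applied to $(M,\bar M,T,d)$, the group $\CN(T^T)$ is naturally a subgroup of $\CN(D)\cong\BZ^m/D\BZ^m=(\BZ/d\BZ)^n$, independent of the choice of $T$. To match $\CN(D)$ with the automorphism group $G$ of $X\binom{dI_n}{0}$ of the type (\ref{auto}), I would unwind the proof of Proposition \ref{cover}: a deck transformation of $T_X$ acts on $X\binom{dI_n}{0}$ by $(x_i,y_j,z_k)\mapsto(x_i,y_j,\lambda_k z_k)$ for a tuple $(\lambda_1,\dots,\lambda_m)\in\CN(T^T)$. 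Since $\CN(T^T)\subseteq\CN(D)$, such a tuple satisfies $\lambda_k^d=1$ for $1\le k\le n$ and $\lambda_k=1$ for $n<k\le m$, so the transformation is exactly of the type (\ref{auto}) with $\zeta_k=\lambda_k$. Conversely, by (\ref{x1}) the variety $X\binom{dI_n}{0}$ is cut out in $\BC^{2n}\times(\BC^*)^m$ by the equations $x_jy_j=z_j^d+1$, $1\le j\le n$, and these are visibly preserved by every substitution of the form (\ref{auto}); hence such substitutions form a group $G\cong(\BZ/d\BZ)^n$, and under the identification $\CN(D)\cong G$, $\lambda\mapsto(z_k\mapsto\lambda_kz_k)$, the subgroup $\CN(T^T)$ is carried onto the deck group of $T_X$, as asserted.

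I do not expect a genuine obstacle here: every step is a direct application of a result already proved in Sections 2.3 and 3.1, and their assembly is purely formal. The only point requiring care is the bookkeeping of transposes — the cover of Proposition \ref{cover} is governed by $T$ acting on the $z$-variables through (\ref{x2}), the deck group comes out as $\CN(T^T)$ rather than $\CN(T)$, and the inclusion $\CN(T^T)\subseteq\CN(D)$ is precisely the one furnished by the factorization $\bar M^TT^T=D$ (equivalently $T\bar M=D$) underlying Proposition \ref{2.7}.
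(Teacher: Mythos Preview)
Your proposal is correct and follows essentially the same route as the paper: both extract $d$ and $T$ from Proposition~\ref{lemma}, invoke Proposition~\ref{cover} to produce the \'etale cover $T_X$ with deck group $\CN(T^T)$, and then identify $\CN(T^T)$ as a subgroup of $(\BZ/d\BZ)^n$ acting via (\ref{auto}). The only cosmetic difference is that the paper phrases the inclusion $\CN(T^T)\subset(\BZ/d\BZ)^n$ geometrically, by factoring the composite cover $X\binom{dI_n}{0}\to X(M)\to X\binom{I_n}{0}$ (the second arrow coming from $\bar M$), whereas you invoke Proposition~\ref{2.7} directly on the group level and then unwind the action; these are the same argument.
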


\begin{proof}
By Proposition \ref{lemma}, we obtain full rank integer matrices $\tilde{M}$ and $T$ such that 
\[
\binom{I_n}{0}\xrightarrow{\tilde{M}\cdot} M\xrightarrow{T\cdot} \binom{dI_n}{0}
\]
with composition the multiplication by $\left(\begin{array}{cc}dI_n&0\\0&I_{m-n}\end{array}\right)$. 
By Proposition \ref{cover}, we have \'etale coverings of cluster varieties
\[
X\binom{I_n}{0}\leftarrow X(M)\leftarrow X\binom{dI_n}{0}
\]
The group of deck transformations of $T_X$ is naturally a subgroup of the one of $X\binom{dI_n}{0}\to X\binom{I_n}{0}$. Note that the latter is induced by $\left(\begin{array}{cc}dI_n&0\\0&I_{m-n}\end{array}\right)$, which is exactly of the form (\ref{auto}) by Proposition \ref{cover}.
\end{proof}

\begin{cor} \label{cor}
Let $M$ be a full rank $m\times n$ integer matrix, where $m\ge n$. Let integer $d$ and integer matrix $T$ be obtained in the Proposition \ref{lemma}. Then $X(M)\cong X(d)^{n}/\CN(T^T)\times (\BC^*)^{m-n}$.
\end{cor}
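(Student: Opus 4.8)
The plan is to combine Proposition \ref{3.3} with an elementary splitting of the variety $X\binom{dI_n}{0}$ into a product. First I would invoke Proposition \ref{3.3}: for the integer $d$ and the full rank matrix $T$ produced by Proposition \ref{lemma}, the matrix $T$ induces a finite \'etale morphism $T_X:X\binom{dI_n}{0}\to X(M)$ whose deck transformation group is $\CN(T^T)$, so that $X(M)\cong X\binom{dI_n}{0}/\CN(T^T)$. It therefore remains to understand the variety $X\binom{dI_n}{0}$ together with the $\CN(T^T)$-action on it.

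Next I would identify $X\binom{dI_n}{0}$ explicitly from the defining equations (\ref{x1}). The matrix $\binom{dI_n}{0}$ has entries $a_{ij}=d\,\delta_{ij}$ for $1\le i\le n$ and $a_{ij}=0$ for $n<i\le m$, so $X\binom{dI_n}{0}$ is cut out in $\BC^{2n}\times(\BC^*)^m$ by the equations $x_jy_j=z_j^d+1$ for $1\le j\le n$. Since the coordinates $z_{n+1},\dots,z_m$ do not occur in any equation and the $j$-th equation involves only $x_j$, $y_j$, $z_j$, partitioning the coordinates into the blocks $(x_j,y_j,z_j)_{1\le j\le n}$ and $(z_k)_{n<k\le m}$ gives an isomorphism of affine varieties
\[
X\binom{dI_n}{0}\cong X(d)^n\times(\BC^*)^{m-n},
\]
where $X(d)\subset\BC^2\times\BC^*$ is the hypersurface $xy=z^d+1$.

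Finally I would track the $\CN(T^T)$-action through this isomorphism. By Proposition \ref{3.3}, $\CN(T^T)$ is a subgroup of $G\cong(\BZ/d\BZ)^n$, and the elements of $G$ act on $X\binom{dI_n}{0}$ by the formula (\ref{auto}): they scale each $z_k$, $1\le k\le n$, by a $d$-th root of unity, and fix all $x_i$, $y_j$, and all $z_k$ with $n<k\le m$. Under the product decomposition above, such an automorphism acts factorwise on $X(d)^n$ (scaling the $z$-coordinate of each factor) and trivially on $(\BC^*)^{m-n}$. Consequently the quotient splits as a product, and we obtain
\[
X(M)\cong X\binom{dI_n}{0}/\CN(T^T)\cong\bigl(X(d)^n\times(\BC^*)^{m-n}\bigr)/\CN(T^T)\cong\bigl(X(d)^n/\CN(T^T)\bigr)\times(\BC^*)^{m-n}.
\]
There is no serious obstacle here: every step is either a direct citation of Proposition \ref{3.3} or a bookkeeping check on which coordinates appear in which equation. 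The one point deserving explicit mention is that the $\CN(T^T)$-action is trivial on the $(\BC^*)^{m-n}$ factor, which is immediate from the shape of (\ref{auto}) and is exactly what makes the final quotient commute with the product.
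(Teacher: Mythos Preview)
Your proof is correct and follows essentially the same approach as the paper's own proof: invoke Proposition \ref{3.3} to realize $X(M)$ as the quotient $X\binom{dI_n}{0}/\CN(T^T)$, split $X\binom{dI_n}{0}$ as $X(d)^n\times(\BC^*)^{m-n}$ from the defining equations, and use the explicit form (\ref{auto}) of the action to see that it is trivial on the $(\BC^*)^{m-n}$ factor so that the quotient commutes with the product. The paper's argument is simply a terser version of yours.
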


\begin{proof}
By Proposition \ref{3.3}, we have $X(M)=X\binom{dI_n}{0}/\CN(T^T)$. By definition $X\binom{dI_n}{0}=X(d)^n\times (\BC^*)^{m-n}$. By (\ref{auto}), the group $\CN(T^T)\subset (\BZ/d\BZ)^n$ acts on the factor $(\BC^*)^n$ trivially. Therefore, $X(M)\cong X(d)^n/\CN(T^T)\times (\BC^*)^{m-n}$ as desired.
\end{proof}

\begin{rmk}
We remark that the hypothesis $m\ge n$ is crucial in Proposition \ref{3.3} and Corollary \ref{cor}.
In the perspective of the correspondence in Proposition \ref{3.1}, the matrix $B=\binom{0}{M}$ is of full rank if and only if $M$ is of full column rank, which is equivalent to $M$ is of full rank and $m\ge n$. In fact, the geometry of the isolated cluster varieties are quite different when $M$ 
is not of full column rank. For instance, $X(M)$ can not be obtained by a finite group quotient of product of isolated cluster varieties of dimension 1 and 2, and does not satisfy any Lefschetz-type symmetry. 
\end{rmk}

\subsection{The space $Y(M)$}
In this section, we will define a (real) Lagrangian fibration denoted as $Y(M)\to \BR^{n+m}$ with general fiber an $(n+m)$-torus and study its geometric properties.

\begin{defn} \label{dfn}
   Let $M$ be an $m\times n$ integer matrix, where $m\ge n$. We define the fibration $Y(M)\to \BR^{n+m}$ as follows:
   \begin{enumerate}
      \item The total space $Y(M)$ is the underlying real analytic manifold of the complex variety $X(M)$,
      \item The $C^\infty$ surjective map $h:Y(M)\to \BR^{n+m}$ is 
       \begin{equation} \label{eqn}
       h(x_i,y_j,z_k)=(|x_i^2-y_i^2|,\log|z_k|).
       \end{equation}
   \end{enumerate}
\end{defn}

It is obvious from the defining equations (\ref{eqn}) that even when $n+m$ is even, the map $h$ is not a holomorphic if we endow $Y(M)$ the complex structure of $X(M)$ and the natural complex structure on $\BR^{n+m}=\BC^{(n+m)/2}$. We will show that when $M$ is of full rank, the map $f$ is a product of a Lagrangian fibration which admits  K\"ahler structure and a trivial torus fibration. We first recall some facts about local models of elliptic fibrations.

By (local) elliptic fibration we mean a proper holomorphic map $f:Y\to D$ from  complex surface $Y$ to the unit disk $D$ such that the general fibers are elliptic curves. Kodaira's table of singular elliptic fibrations gives a complete classification of local elliptic fibration. For the purpose of this paper, we are mainly interested in the type $I_d$, \emph{i.e.} the central fiber consists of $d$ copies of $\BP^1$ intersecting as a necklace. The following result concerns the relation among elliptic fibrations $I_d$ for different $d$.

\begin{prop} \label{Kahler}
Let $Y_d\to D$ be the elliptic fibration of type $I_d$ over the unit disk $D$. Then there is a properly discontinuous $G=\BZ/d\BZ$-action of isomorphisms of K\"ahler manifolds on $Y$, such that the quotient space $Y/G$ fits into the commutative diagram in the category of K\"ahler manifolds

\[
\begin{tikzcd}
Y_d\arrow[r]\arrow[rd]& Y_d/G=Y_1\arrow[d]\\
 & D & 
\end{tikzcd}
\]
with $Y/G\to D$ the elliptic fibration of type $I_1$. Furthermore, this construction is functorial, \emph{i.e.} when $d$ is a multiple of $d'$, then the quotient map $Y_{d}\to Y_1$ factors through $Y_{d'}\to Y_1$.
\end{prop}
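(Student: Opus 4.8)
The plan is to construct every $Y_d$ out of one fixed space, so that the group action, the identification $Y_d/G=Y_1$, the compatibility with the projections to $D$, and the functoriality all become essentially tautological, leaving only the K\"ahler assertion and the comparison with Kodaira's list as real content.

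\textbf{The model.} I would first recall the plumbing model of the minimal elliptic fibration with an infinite chain of rational curves: glue copies $U_k\cong\BC^2$, $k\in\BZ$, with coordinates $(a_k,b_k)$, gluing $U_k$ to $U_{k+1}$ over $\{b_k\neq 0\}$ by $a_{k+1}=b_k^{-1}$, $b_{k+1}=a_kb_k^2$, and define $p_\infty\colon Y_\infty\to D$ by $t=a_kb_k$. The central fibre is the infinite chain $\cdots-C_{-1}-C_0-C_1-\cdots$ with $C_k=\{a_k=0\}\cong\BP^1$ and consecutive components meeting at one node, and $Y_\infty|_{D^*}\cong\BC^*\times D^*$. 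The index shift $\sigma\colon U_k\xrightarrow{\sim}U_{k+1}$, the identity in coordinates, is a biholomorphism of $Y_\infty$ commuting with $p_\infty$, which acts freely and properly discontinuously (over $D^*$ it is $(w,t)\mapsto(tw,t)$, and on the central fibre it carries $C_k$ isomorphically onto $C_{k+1}$). Then $Y_d:=Y_\infty/\langle\sigma^d\rangle$ is a smooth complex surface, $p_d\colon Y_d\to D$ is proper with general fibre $\BC^*/t^{d\BZ}$ and central fibre a cycle of $d$ copies of $\BP^1$; by the uniqueness part of Kodaira's classification of local elliptic fibrations this is the type $I_d$ fibration. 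In particular $Y_\infty$ is the common universal cover of all the $Y_d$, and $Y_d\to Y_1$ is the connected $d$-fold cyclic cover.

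\textbf{The $G$-action and the quotient.} Let $G:=\langle\sigma\rangle/\langle\sigma^d\rangle\cong\BZ/d\BZ$ act on $Y_d$ as the deck group of the covering $Y_d\to Y_1$. It acts by biholomorphisms, freely and properly discontinuously, and commutes with the projections to $D$; and $Y_d/G=Y_\infty/\langle\sigma\rangle=Y_1$ by construction. To recognise $p_1\colon Y_1\to D$ as the type $I_1$ fibration it suffices to compute its central fibre: this is the quotient of the $d$-cycle of $\BP^1$'s by the free cyclic shift $\sigma$, namely a single $\BP^1$ with two points identified, i.e. a nodal cubic --- the $I_1$ fibre.

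\textbf{K\"ahler structure and functoriality.} The $I_1$ fibration $Y_1\to D$ is smooth and relatively minimal, hence coincides with its Weierstrass model and is a projective morphism; taking a relatively ample line bundle with a fibrewise-positive Hermitian metric and adding a large multiple of the pullback of the standard form on $D$ produces a K\"ahler form $\omega_1$ on $Y_1$. Its pullback $\omega_d$ to $Y_d$ along the covering is again K\"ahler, the covering being a local biholomorphism, and is automatically $G$-invariant as a pullback from the quotient; hence $G$ acts on $(Y_d,\omega_d)$ by K\"ahler isometries, $\omega_d$ descends to $\omega_1$, and the square in the statement commutes in the category of K\"ahler manifolds. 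Finally, if $d'\mid d$ then $\langle\sigma^d\rangle\subset\langle\sigma^{d'}\rangle\subset\langle\sigma\rangle$ yields maps $Y_d=Y_\infty/\langle\sigma^d\rangle\to Y_\infty/\langle\sigma^{d'}\rangle=Y_{d'}\to Y_\infty/\langle\sigma\rangle=Y_1$ over $D$, the first being the quotient by the subgroup $\langle\sigma^{d'}\rangle/\langle\sigma^d\rangle\cong\BZ/(d/d')\BZ$ of $G$; hence $Y_d\to Y_1$ factors through $Y_{d'}\to Y_1$, as required.

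\textbf{Main obstacle.} Once the model is in place the rest is formal; the two steps needing actual care are identifying $Y_\infty/\langle\sigma^d\rangle$ with Kodaira's type $I_d$ fibration (so that ``$Y_d$'' is unambiguous), which relies on the uniqueness in Kodaira's classification, and establishing that $Y_1\to D$ is relatively projective (hence $Y_1$ K\"ahler). I expect the identification with Kodaira's $I_d$ to be the only genuinely delicate point.
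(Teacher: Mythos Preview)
Your proof is correct and follows essentially the same approach as the paper's: both build a common $\BZ$-cover $Y_\infty$ (the paper calls it $\mathfrak{D}$) whose central fibre is an infinite chain of $\BP^1$'s, observe that the shift $\sigma$ acts as multiplication by $t$ on the $\BC^*$-fibres over $D^*$ and as translation on the chain, and then realise every $Y_d$ as $Y_\infty/\langle\sigma^d\rangle$, so that the $\BZ/d\BZ$-action, the identification $Y_d/G=Y_1$, and the factorisation through $Y_{d'}$ are all tautological. The only presentational differences are that the paper builds $Y_\infty$ by iterated blow-ups of $D\times\BP^1$ along the $0$- and $\infty$-sections (your plumbing charts are exactly the local coordinates of that blow-up), and that the paper asserts the K\"ahler form is preserved without further comment while you supply a cleaner argument by producing $\omega_1$ on $Y_1$ from relative projectivity and pulling it back, which automatically gives $G$-invariance.
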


\begin{proof}
This statement essentially follows from \cite[Theorem/Definition 6.1]{DMS}. We first construct the universal cover of the $Y_1$. Start with the trivial projection $D\times \BP^1\to D$, and let $q$ be the coordinate on the base $D$. We iteratively blow-up the two intersections of central fiber $\BP^1\times 0$ and the strict transform of the sections $0\times D$ and $\infty\times D$. Finally remove $0\times D^*$ and $\infty\times D^*$. We denote the fibration we obtained by $q:\mathfrak{D}\to D$. By construction, the central fiber is a chain of infinitely many copies of $\BP^1$ and $q^{-1}(D)\to D$ is a trivial $\BC^*$ bundle. A easy local coordinate calculation indicates that there exists a $\BZ$-action on $\mathfrak{D}$, such that on fiber $\BC^*_t$ over $t\in D$, $n$ acts by multiplication by $t^n$ and on the central fiber $q^{-1}(0)$, $n$ acts by translating the infinite chain by $n$ steps. This action is properly discontinuous, preserves the K\"ahler form, and the quotient $\mathfrak{D}/\BZ\to D$ is exactly the elliptic fibration of type $I_1$. Let $d\BZ\subset \BZ$ be the subgroup, then $\mathfrak{D}/d\BZ\to D$ is the elliptic fibration of type $I_d$. The functoriality follows automatically from the construction.
\end{proof}

\begin{prop}{\cite[Theorem 4.2]{Z}} \label{3.7}
Let $h:Y(d)\to \BR^2$ be the Lagrangian fibration. Then the fibration of real analytic manifolds $h:Y(d)\to \BR^2$ underlies the elliptic fibration of type $I_d$, i.e. there exists K\"ahler structures $(J_d,\omega_d)$ and $(J,\omega)$ on $Y(d)$ and $\BR^2$, respectively such that $h$ is a morphism of K\"ahler manifolds, and the following diagram commutes.
    \[
    \begin{tikzcd}
     (Y(d),J_d,\omega_d)\ar[r,"holom.","\cong"swap]\ar[d]& Y_d\ar[d]\\
     (\BR^2,J,\omega)\ar[r,"holom.","\cong"swap]& D      
    \end{tikzcd}    
    \]
    Furthermore, the K\"ahler structures are compatible with the $\BZ/d\BZ$-action, i.e. the following diagram commutes.
    \[
    \begin{tikzcd}
    (Y(d),J_d,\omega_d)\arrow[r]\arrow[d,"/G"]& Y_d\arrow[d,"/G"]\\
    (Y(1),J_1,\omega_1)\arrow[r]&Y_1,
    \end{tikzcd}
    \]
where $Y(d)\to Y(1)$ is the underlying quotient map by $\BZ/d\BZ$ of real analytic spaces defined by $T=(d)$ in Proposition \ref{cover}, and $Y_d\to Y_1$ is defined in Proposition \ref{Kahler}.
\end{prop}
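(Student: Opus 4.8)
\emph{Proof proposal.} The statement is \cite[Theorem 4.2]{Z}, so I describe the plan one would follow to establish it. All the geometry is concentrated in the case $d=1$: the plan is to settle that case first, and then obtain the case $d>1$, the $\BZ/d\BZ$-equivariance, and the commutativity of both squares as formal consequences via the covering maps of Proposition \ref{cover}.

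\emph{The case $d=1$.} Here $X(1)=\{xy=z+1\}\cong\BC^2\setminus\{xy=1\}$ and the Lagrangian map is $h(x,y)=(|x^2-y^2|,\log|xy-1|)$, which is visibly \emph{not} holomorphic for the affine complex structure; the plan is to replace that structure. Concretely, one writes down explicitly a complex structure $J_1$ and a K\"ahler form $\omega_1$ on the real manifold $Y(1)$, together with a K\"ahler structure on $\BR^2$ realizing it as the disk $D$, for which $h$ becomes a proper holomorphic map that is a submersion away from a single fibre, whose smooth fibres are $2$-tori carrying the complex structure of Tate curves $\BC^*/q^{\BZ}$ and whose one degenerate fibre is a nodal rational curve. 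By Kodaira's classification of local elliptic fibrations this forces $h\colon(Y(1),J_1)\to D$ to be the $I_1$ model $Y_1\to D$ — equivalently, one matches the resulting surface directly with $\mathfrak{D}/\BZ$ of Proposition \ref{Kahler}. I expect the construction of $(J_1,\omega_1)$, and the verification of integrability, positivity, properness, and the shape of the degenerate fibre, to be the only serious obstacle; it is the computational heart of \cite[Theorem 4.2]{Z}.

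\emph{The case $d>1$.} By Proposition \ref{cover} with $M=(1)$, $M'=(d)$, $T=(d)$, the map $\pi\colon X(d)\to X(1)$, $(x,y,z)\mapsto(x,y,z^d)$, is a finite \'etale cover of irreducible varieties, hence connected, with deck group $\CN(T^T)=\CN((d))\cong\BZ/d\BZ$ acting by $z\mapsto\zeta z$ ($\zeta^d=1$) and fixing $x,y$; in particular $h$ is manifestly invariant under this action, and a direct computation gives $h_1\circ\pi=\psi\circ h$, where $h_1$ is the Lagrangian map on $Y(1)$ and $\psi(a,s)=(a,ds)$. From the $d=1$ case one gets $\pi_1(Y(1))\cong\BZ$, so $\pi$ and the quotient map $\mathfrak{D}/d\BZ\to\mathfrak{D}/\BZ$ of Proposition \ref{Kahler} both correspond to the unique index-$d$ subgroup $d\BZ\subset\BZ$ and are therefore isomorphic as covering spaces of $Y(1)\cong Y_1$. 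I would use this isomorphism to \emph{define} $(J_d,\omega_d)$ by pulling back the K\"ahler structure of $Y_d=\mathfrak{D}/d\BZ$; then $\pi$ is holomorphic, $(J_d,\omega_d)$ is K\"ahler, the $\BZ/d\BZ$-action (being the deck group, which commutes with $\pi$) is by K\"ahler isomorphisms, and the second square in the statement commutes by construction. The first square then follows from $h_1\circ\pi=\psi\circ h$, from the compatibility of the fibrations $\mathfrak{D}/d\BZ\to D$ and $\mathfrak{D}/\BZ\to D$, and from transporting the base K\"ahler structure of the $d=1$ case through $\psi$ to produce $(J,\omega)$ on $\BR^2$.
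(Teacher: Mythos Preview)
The paper does not supply a proof of Proposition~\ref{3.7}; it is quoted verbatim from \cite[Theorem~4.2]{Z} and used as a black box. There is therefore nothing in the present paper to compare your proposal against.

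That said, your outline is a coherent and essentially correct strategy for proving the cited result. The reduction to $d=1$ via the \'etale cover $\pi\colon(x,y,z)\mapsto(x,y,z^d)$ of Proposition~\ref{cover}, the relation $h_1\circ\pi=\psi\circ h$ with $\psi(a,s)=(a,ds)$, and the identification of $Y(d)$ with $Y_d$ as the unique connected index-$d$ cover of $Y(1)\cong Y_1$ (using $\pi_1(Y_1)\cong\BZ$, which follows once $Y_1$ is known to retract onto its nodal central fibre) are all valid, and the deduction of the K\"ahler structure and $\BZ/d\BZ$-equivariance on $Y(d)$ by pullback is legitimate. The one substantive step, as you correctly flag, is the explicit construction of $(J_1,\omega_1)$ making $h\colon Y(1)\to\BR^2$ into the $I_1$ model; neither you nor the present paper carries this out, and for it one must consult \cite{Z} directly.
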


Now let $M$ be a full rank $m\times n$ integer matrix, where $m\ge n$. Proposition \ref{lemma} produces an integer $d$ and an $m\times m$ full rank integer matrix $T$. Let $Y^n_d\to D^n$ be the $n$-fold self-Cartesian product. Then by Proposition \ref{Kahler}, the total space $Y^n_d$ admits a  $(\BZ/d\BZ)^n$-action of automorphisms of K\"ahler manifolds, which preserves fibers. Note that Proposition \ref{2.7} implies $\CN(T^T)\subset (\BZ/d\BZ)^n$. We now have a proper map between K\"ahler manifolds $Y^n_d/\CN(T^T)\to D^n$.

\begin{prop} \label{3.8}
Let $M$ be a full rank $m\times n$ integer matrix, where $m\ge n$. Let integer $d$ and integer matrix $T$ be obtained in the Proposition \ref{lemma}. Then $h:Y(M)\to \BR^{n+m}$ is the Cartesian product of $Y_d^n/\CN(T^T)\to D^n$ and the trivial torus fibration $(\BC^*)^{m-n}\to \BR^{m-n}$.
\end{prop}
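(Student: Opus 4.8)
By Corollary \ref{cor} the real analytic manifold underlying $Y(M)$ is already $\big(Y(d)^n/\CN(T^T)\big)\times(\BC^*)^{m-n}$, so the content of the proposition is that the map $h$ of Definition \ref{dfn} respects this product decomposition and that on the first factor it underlies the K\"ahler fibration $Y_d^n/\CN(T^T)\to D^n$. The plan is to pull $h$ back along the finite \'etale cover $T_X:X\binom{dI_n}{0}\to X(M)$ of Proposition \ref{3.3}, recognize the pullback as the evident product fibration on $X\binom{dI_n}{0}=X(d)^n\times(\BC^*)^{m-n}$, apply Proposition \ref{3.7} to each $X(d)$-factor, and then descend by the deck group $\CN(T^T)$.

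First I would record that the defining equations $x_jy_j=z_j^d+1$ ($1\le j\le n$) of $X\binom{dI_n}{0}$ involve none of $z_{n+1},\dots,z_m$, so $X\binom{dI_n}{0}=X(d)^n\times(\BC^*)^{m-n}$ with the $j$-th $X(d)$-factor cut out by $x_jy_j=z_j^d+1$. Writing primed coordinates on this cover, Proposition \ref{cover} (applied with $M'=\binom{dI_n}{0}=TM$) gives $T_X:x_i=x'_i,\ y_j=y'_j,\ z_k=\prod_l (z'_l)^{t_{lk}}$, so on underlying real manifolds
\[
h\circ T_X=(\Id_{\BR^n}\times T^T)\circ h',\qquad h'(x'_i,y'_j,z'_l)=\big((|(x'_i)^2-(y'_i)^2|)_i,\ (\log|z'_l|)_l\big).
\]
Since $T^T$ is invertible over $\BR$, $h\circ T_X$ and $h'$ agree as fibrations; and after reordering the base so that the pair $(|x_j^2-y_j^2|,\log|z_j|)$ appears together for each $j\le n$, the map $h'$ is exactly the Cartesian product of the $n$ maps $Y(d)\to\BR^2$ of Definition \ref{dfn} with the trivial torus fibration $(\BC^*)^{m-n}\to\BR^{m-n}$.

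Then I would descend. By (\ref{auto}) the deck group $\CN(T^T)\subset(\BZ/d\BZ)^n$ acts on $X\binom{dI_n}{0}$ coordinatewise by $z_j\mapsto\zeta_jz_j$ ($1\le j\le n$) on the $X(d)^n$-factor and trivially on $(\BC^*)^{m-n}$; since $|\zeta_j|=1$ this preserves $h'$, so $h'$ descends to $\big(Y(d)^n/\CN(T^T)\big)\times(\BC^*)^{m-n}\to\BR^{n+m}$, which equals $h$ up to the base isomorphism $\Id\times T^T$. By Proposition \ref{3.7}, $Y(d)\to\BR^2$ underlies the $I_d$ elliptic fibration $Y_d\to D$ compatibly with the $\BZ/d\BZ$-action; taking $n$-fold Cartesian products (products of K\"ahler manifolds, of morphisms of K\"ahler manifolds, and of properly discontinuous actions) gives that $Y(d)^n\to\BR^{2n}$ underlies $Y_d^n\to D^n$ compatibly with the $(\BZ/d\BZ)^n$-action. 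Since $\BZ/d\BZ$ acts freely on $Y(d)$ (the cover $Y(d)\to Y(1)$ is \'etale), the subgroup $\CN(T^T)$ acts freely on $Y(d)^n$, and hence $Y(d)^n/\CN(T^T)\to\BR^{2n}$ underlies the K\"ahler fibration $Y_d^n/\CN(T^T)\to D^n$ of Proposition \ref{Kahler}. Putting the two factors together yields the claim.

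The part needing care is not deep: (a) the short computation that $T_X$ intertwines $h$ and $h'$ up to the linear base change $T^T$, and (b) checking that the functorial, equivariant K\"ahler package of Propositions \ref{Kahler} and \ref{3.7} survives the $n$-fold Cartesian power and the further finite free quotient by $\CN(T^T)\subset(\BZ/d\BZ)^n$. The main obstacle is purely bookkeeping: tracking which coordinates each group acts on and the reordering of $\BR^{n+m}$ that converts the ``mixed'' ordering $\big((|x_i^2-y_i^2|)_i,(\log|z_k|)_k\big)$ into the product ordering $\big((|x_j^2-y_j^2|,\log|z_j|)_{j\le n},(\log|z_k|)_{k>n}\big)$.
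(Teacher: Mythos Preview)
Your proof is correct and follows essentially the same route as the paper: identify $Y\binom{dI_n}{0}$ with the product $Y(d)^n\times(\BC^*)^{m-n}$, observe that the $\CN(T^T)$-action preserves fibers and is trivial on the second factor, descend, and then invoke Proposition~\ref{3.7} to upgrade the first factor to the K\"ahler fibration $Y_d^n/\CN(T^T)\to D^n$. Your version is in fact more explicit than the paper's at the one point the paper glosses over: you actually compute $h\circ T_X=(\Id_{\BR^n}\times T^T)\circ h'$ and note that $T^T\in\mathrm{GL}_m(\BR)$, so that $h$ on $Y(M)$ agrees with the descended product fibration up to a linear change of coordinates on the base---this is the honest content of the assertion ``$h$ is the Cartesian product'' and the paper simply asserts it.
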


\begin{proof}
By Definition \ref{dfn}, the fibration $Y\binom{dI_n}{0}\to \BR^{n+m}$ is the product of $Y(d)^n\to \BR^{2n}$ and $(\BC^*)^{m-n}\to \BR^{m-n}$. By the concrete description (\ref{auto}), the natural $(\BZ/n\BZ)^n$-action on $Y\binom{dI_n}{0}$ preserves the fibers of $Y(d)^n\to \BR^{2n}$ and is trivial on the factor $(\BC^*)^{m-n}\to \BR^{m-n}$. Now forgetting the complex structure of Corollary \ref{cor}, we have $Y(M)=Y(d)^n/\CN(T^T)\times (\BC^*)^{m-n}\to \BR^{n+m}$ is a Cartesian product of $Y(d)^n/\CN(T^T)\to\BR^{2n}$ and $(\BC^*)^{m-n}\to \BR^{m-n}$. Finally by Proposition \ref{3.7},  $Y(d)^n/\CN(T^T)\to \BR^{2n}$ underlies the holomorphic Lagrangian fibration $Y_d^n/\CN(T^T)\to D^n$.
\end{proof}

The smooth structure of general fibers of $h:Y(M)\to \BR^{n+m}$ easily follows from Proposition \ref{3.8}.

\begin{cor}
   Let $M$ be a full rank $n\times m$ integer matrix with $m\ge n$. Then the general fibers of $h:Y(M)\to \BR^{n+m}$ are torus $T^{n+m}$.
\end{cor}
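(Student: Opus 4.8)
The plan is to read the fiber type directly off the product decomposition in Proposition \ref{3.8}. By that proposition, $h\colon Y(M)\to\BR^{n+m}$ is the Cartesian product of $Y_d^n/\CN(T^T)\to D^n$ and the trivial torus fibration $(\BC^*)^{m-n}\to\BR^{m-n}$, so it suffices to identify the general fiber of each factor separately and take the product.

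The second factor is easy: every fiber of $(\BC^*)^{m-n}\to\BR^{m-n}$, $(z_{n+1},\dots,z_m)\mapsto(\log|z_{n+1}|,\dots,\log|z_m|)$, is the real torus $(S^1)^{m-n}$. For the first factor, recall from Kodaira's classification that the general fiber of a local elliptic fibration of type $I_d$ is a smooth elliptic curve, hence diffeomorphic to $T^2$; taking the $n$-fold product, the general fiber of $Y_d^n\to D^n$ is $T^{2n}$. It then remains to check that passing to the quotient by $\CN(T^T)\subset(\BZ/d\BZ)^n$ does not change the diffeomorphism type of the general fiber. Since $\CN(T^T)$ acts fiberwise over $D^n$ (Proposition \ref{Kahler}), the fiber of $Y_d^n/\CN(T^T)\to D^n$ over a general point $(t_1,\dots,t_n)$ is the quotient $(E_{t_1}\times\cdots\times E_{t_n})/\CN(T^T)$. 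The key point is that this action is by translations: by the explicit multiplication-by-$t$ description recorded in Propositions \ref{Kahler} and \ref{3.7}, the generator of the $k$-th $\BZ/d\BZ$-factor acts on $E_{t_k}=\BC^*/t_k^{d\BZ}$ by multiplication by $t_k$, which is a fixed-point-free translation of order $d$ (as $t_k$ is not a root of unity for general $t_k$), and acts trivially on the other factors. Hence every nonidentity element of $\CN(T^T)$ acts freely, and a finite free quotient of a torus by translations is again a torus of the same dimension, so the general fiber of $Y_d^n/\CN(T^T)\to D^n$ is again $T^{2n}$.

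Combining the two factors, the general fiber of $h$ is $T^{2n}\times(S^1)^{m-n}=T^{n+m}$, as claimed. I do not expect a genuine obstacle here; the only point requiring a little care is the verification that the $\CN(T^T)$-action on the general fiber is by translations (so that the quotient is smooth and is again a torus), which follows by unwinding the explicit action in Propositions \ref{Kahler} and \ref{3.7}.
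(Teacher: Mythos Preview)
Your argument is correct and follows exactly the route the paper intends: the paper simply asserts that the corollary ``easily follows from Proposition~\ref{3.8}'' without writing out a proof, and you have supplied the details of that deduction. The one point you take care to justify---that the $\CN(T^T)$-action on a general fiber $E_{t_1}\times\cdots\times E_{t_n}$ is by translations and hence free, so the quotient is again a $2n$-torus---is left implicit in the paper but is indeed the content one must check; your unwinding of Propositions~\ref{Kahler} and~\ref{3.7} handles it correctly (note that freeness follows already from $|t_k|<1$, so the ``not a root of unity'' parenthetical could be sharpened, but this is cosmetic).
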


\subsection{P=W}
Let $M$ be a full rank $m\times n$ integer matrix with $m\ge n$. Let $X(M)$ be the associated isolated cluster variety and $Y(M)\to \BR^{n+m}$ be the Lagrangian fibration. In this section, we will show that the mixed Hodge-theoretic weight filtration on the cohomology groups of $X(M)$ equals the perverse filtration associated with the fibration $h:Y(M)\to \BR^{n+m}$, \emph{i.e.} the $P=W$ identity holds.

We start with a trivial case: $X=\BC^*$ and $Y=\BC^*\cong S^1\times \BR\to\BR$, which corresponds to the degenerate matrix $m=1$, $n=0$.

\begin{lem} \label{trivial}
   Let $h:\BC^*\cong S^1\times \BR\to \BR$ be the natural projection. Then $h$ admits a perverse decomposition in the sense of equation (\ref{0101}) and the $P=W$ identity holds for the quasi-projective variety $\BC^*$ and fibration $h$.
\end{lem}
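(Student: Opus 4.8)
The plan is to compute both filtrations on $H^*(\BC^*,\BQ)$ explicitly and observe they agree by inspection, since everything in sight is one-dimensional. First I would record the cohomology: $H^0(\BC^*,\BQ)=\BQ$ and $H^1(\BC^*,\BQ)=\BQ$, with $H^0$ generated by the fundamental class $1$ and $H^1$ generated by a class $\eta$ (the pullback of the generator of $H^1(S^1,\BQ)$, or equivalently the restriction of $\frac{1}{2\pi i}\frac{dz}{z}$). The Deligne weight filtration on $H^*(\BC^*,\BQ)$ is well known: $W_0H^0=H^0$ (weight $0$) and $W_2H^1=H^1$ with $W_1H^1=0$ (weight $2$); so $W_0H^* = H^0$ and $W_2H^* = H^*$, with nothing new happening at odd indices, i.e. $W_{2k}H^* = W_{2k+1}H^*$ for all $k\ge 0$.

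Next I would compute the perverse filtration for $h:S^1\times\BR\to\BR$. Since $h$ is a topologically trivial fiber bundle with fiber $S^1$ over the contractible base $\BR$, we have $Rh_*\BQ_{S^1\times\BR} \cong \BQ_\BR \oplus \BQ_\BR[-1]$ by the (trivial) Künneth/projection formula. Both summands $\BQ_\BR$ and $\BQ_\BR[-1]$ are perverse sheaves on the $1$-dimensional manifold $\BR$ up to the shift convention in the excerpt — more precisely, with the stated perversity $p(n)=\lceil -n/2\rceil$ and the index-shifting normalization that puts $1\in P_0H^0$, the summand $\BQ_\BR[-j]$ contributes to $\mathrm{Gr}^P_j$. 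This exhibits the perverse decomposition $Rh_*\BQ_X = \CP_0[0]\oplus\CP_1[-1]$ with $\CP_0=\CP_1=\BQ_\BR$ (up to the normalizing shift), proving the first assertion of the lemma. Taking hypercohomology of $\bigoplus_{j\le k}\CP_j[-j]$ then gives $P_0H^*(\BC^*,\BQ)=H^0=\langle 1\rangle$ and $P_1H^*(\BC^*,\BQ)=H^*$, so $\Fp(1)=0$ and $\Fp(\eta)=1$.

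Comparing, $P_kH^*(\BC^*,\BQ)=W_{2k}H^*(\BC^*,\BQ)=W_{2k+1}H^*(\BC^*,\BQ)$ holds for every $k\ge 0$: for $k=0$ both sides equal $H^0$, and for $k\ge 1$ both sides equal $H^*(\BC^*,\BQ)$. This establishes the $P=W$ identity in this base case.

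Since the spaces are so small, there is no real obstacle; the only point requiring a little care is bookkeeping with the shift/normalization convention so that the summand of $Rh_*\BQ_X$ living in cohomological degree $j$ really does land in $P_j \setminus P_{j-1}$ — i.e. matching the stated convention "$1\in P_0H^0$" against the chosen perversity function. Once that normalization is pinned down, both the perverse decomposition statement and the numerical identity are immediate.
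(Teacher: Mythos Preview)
Your proposal is correct and follows essentially the same approach as the paper: exhibit the decomposition $Rh_*\BQ_{\BC^*}\cong\BQ_\BR\oplus\BQ_\BR[-1]$ coming from the trivial $S^1$-bundle structure, observe that (with the stated perversity and normalization) this is a perverse decomposition making $P_kH^*(\BC^*,\BQ)=H^{\le k}(\BC^*,\BQ)$, and compare with the well-known weight filtration $W_{2k}H^*(\BC^*,\BQ)=H^{\le k}(\BC^*,\BQ)$. The paper's proof is just a terser version of exactly this computation.
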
 

\begin{proof}
   The decomposition of the projection $h$ is clear   
   \begin{equation}\label{0202}
   Rh_*\BQ_{\BC^*}=\BQ_{\BR^1}\oplus \BQ_{\BR^1}[-1].
   \end{equation}
   With respect to the choice of the perversity function, $\BQ_{\BR^1}$ is a perverse sheaf. Therefore, equation $(\ref{0202})$ is a perverse decomposition of the fibration $h$. The perverse filtration is exactly the cohomological filtration, \emph{i.e.} 
   \[
   P_kH^*(\BC^*,\BQ)=H^{\le k}(\BC^*,\BQ),~~~~ k=0,1.
   \]
   On the other hand, the weight filtration satisfies $W_{2k}H^*(\BC^*,\BQ)=H^{\le k}(\BC^*,\BQ)$. We conclude that the $P=W$ identity holds.
\end{proof}

The $m=n=1$ case is proved as one of the main theorems of \cite{Z}.

\begin{prop} {\cite[Theorem 5.5]{Z}} \label{base}
    Let $d$ be a non-zero integer. Then the $P=W$ identity holds for the isolated cluster variety $X(d)$ and the Lagrangian fibration $Y(d)\to \BR^{n+m}$.
\end{prop}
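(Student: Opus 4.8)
The plan is to compute the two filtrations on $H^*(X(d),\BQ)=H^*(Y(d),\BQ)$ explicitly and see that they match. We may assume $d\ge1$, since $z\mapsto z^{-1}$ identifies $X(d)$ with $X(-d)$ compatibly with both the algebraic structure and the fibration. As $X(d)=\{xy=z^d+1\}\subset\BC^2\times\BC^*$ is a smooth affine surface its cohomology vanishes above degree $2$; and by Proposition \ref{3.7} the real manifold $Y(d)$ underlies the local elliptic fibration $\pi\colon Y_d\to D$ of type $I_d$, hence retracts onto the central fibre $C_d$, a cycle of $d$ copies of $\BP^1$. Either description gives $\dim H^0=1$, $\dim H^1=1$, $\dim H^2=d$, so it remains to place these spaces in the two filtrations; below I record the answer as a triple listing the pieces in cohomological degrees $0$, $1$, $2$.

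\emph{The perverse side.} By Proposition \ref{3.7}, $h\colon Y(d)\to\BR^2$ underlies the proper holomorphic map $\pi\colon Y_d\to D$ of K\"ahler surfaces, so the decomposition theorem applies and $\pi$ has a perverse decomposition with three perverse cohomology sheaves $\CP_0,\CP_1,\CP_2$ (in the normalization where $1\in P_0H^0$). Over $D^*=D\setminus\{0\}$ the map is a smooth elliptic fibration, with $R^1\pi_*\BQ|_{D^*}=\CL$ a rank-two local system of unipotent $I_d$-monodromy, so $\dim\CL^{\mathrm{mono}}=1$. The standard analysis of the decomposition theorem for an $I_d$ elliptic fibration gives $\CP_0=\BQ_D$ and, by relative hard Lefschetz, $\CP_2\cong\CP_0$, both coming from $H^0$ and $H^2$ of the elliptic fibres; the middle sheaf is $\CP_1=\mathrm{IC}_D(\CL)\oplus\BQ_0^{\oplus(d-1)}$, the $d-1$ skyscrapers recording the reducible components of the central $I_d$ fibre not seen generically. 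Taking hypercohomology over the contractible disk $D$, and using that $\BH^*(D,\mathrm{IC}_D(\CL))$ is one-dimensional and concentrated in a single degree (it is $\CL^{\mathrm{mono}}$), one reads off
\[
P_0H^*=(\BQ,0,0),\qquad P_1H^*=(\BQ,\BQ,\BQ^{d-1}),\qquad P_2H^*=(\BQ,\BQ,\BQ^{d});
\]
in particular the line $\textup{Gr}^P_2H^2$ is the image of $\textup{Gr}^P_0H^0$ under $\cup\alpha$, while the $(d-1)$-dimensional part of $H^2$ lies in perverse degree $1$.

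\emph{The weight side.} I work with the algebraic structure of $X(d)$ through the morphism $\phi\colon X(d)\to\BC^*$, $(x,y,z)\mapsto z$, whose fibre $\{xy=z^d+1\}\subset\BC^2$ is $\cong\BC^*$ away from the $d$ roots $S$ of $-1$ and is the contractible cross $\{xy=0\}$ over each such root. The Leray spectral sequence of $\phi$ is one of mixed Hodge structures; directly, $R^0\phi_*\BQ=\BQ$, $R^{\ge2}\phi_*\BQ=0$, and $R^1\phi_*\BQ=j_!\bigl(\BQ(-1)_U\bigr)$, where $U=\BC^*\setminus S$, $j\colon U\hookrightarrow\BC^*$ and $i\colon S\hookrightarrow\BC^*$ (extension by zero: the special fibres carry no $H^1$, and the family $\{xy=c\}$ has no $H^1$-monodromy around $c=0$). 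Hence $H^0(X(d))=\BQ(0)$, $H^1(X(d))=H^1(\BC^*)=\BQ(-1)$ (nonzero because $\phi$ admits a section), and $H^2(X(d))=H^1(\BC^*,j_!\BQ(-1)_U)$; the sheaf sequence $0\to j_!\BQ(-1)_U\to\BQ(-1)_{\BC^*}\to i_*\BQ(-1)_S\to0$ then produces the extension of mixed Hodge structures
\[
0\longrightarrow\BQ(-1)^{\oplus(d-1)}\longrightarrow H^2(X(d))\longrightarrow\BQ(-2)\longrightarrow0 .
\]
So every weight-graded piece is of Tate type, and $W_0H^*=(\BQ,0,0)$, $W_2H^*=(\BQ,\BQ,\BQ^{d-1})$, $W_4H^*=(\BQ,\BQ,\BQ^{d})$, with the odd-index steps equal to the preceding even ones. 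Comparing with the previous display, $P_kH^j(Y(d))=W_{2k}H^j(X(d))=W_{2k+1}H^j(X(d))$ for all $j,k\ge0$.

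The main obstacle is the perverse-side computation: one must be sure that the $d-1$ "extra" cohomology classes of the reducible $I_d$ fibre enter $P_1H^2$—as skyscraper summands of the \emph{middle} perverse cohomology sheaf $\CP_1$—rather than $P_2H^2$. Equivalently, by relative hard Lefschetz the top piece $\textup{Gr}^P_2H^2\cong\textup{Gr}^P_0H^0$ is forced to be one-dimensional. It is precisely this that makes the perverse filtration track the mixed (not pure) weight filtration on $H^2(X(d))$. The remaining ingredients—compatibility of the Leray spectral sequence and of the sheaf sequence above with mixed Hodge structures, and the elementary determination of the sheaves $R^q\phi_*\BQ$—are routine.
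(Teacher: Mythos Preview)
Your argument is correct. In the present paper this proposition is not proved at all: it is simply quoted from \cite[Theorem~5.5]{Z}, so strictly speaking there is no ``paper's own proof'' to compare against here. What you have supplied is an independent, self-contained verification of the $n=m=1$ case.

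On the perverse side your use of Proposition~\ref{3.7} to replace $h:Y(d)\to\BR^2$ by the K\"ahler elliptic fibration $Y_d\to D$ is exactly the intended reduction, and your description of the decomposition $R\pi_*\BQ\cong\BQ_D\oplus\bigl(j_*\CL\oplus i_*\BQ_0^{\oplus(d-1)}\bigr)[-1]\oplus\BQ_D[-2]$ is the standard one for an $I_d$ fibre; the key point you highlight---that the $d-1$ skyscrapers sit in the middle perverse piece, forced by relative hard Lefschetz---is precisely what makes the numbers match. One small wording issue: ``retracts onto the central fibre'' is stronger than what you actually use; proper base change over the contractible disk already gives $H^*(Y_d)\cong H^*(C_d)$, which is all you need.

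On the weight side your Leray argument for $\phi:(x,y,z)\mapsto z$ is clean and correct: the triviality of the $H^1$-monodromy of $\{xy=c\}$ around $c=0$ is what pins down $R^1\phi_*\BQ=j_!\BQ(-1)_U$, and the resulting extension $0\to\BQ(-1)^{\oplus(d-1)}\to H^2(X(d))\to\BQ(-2)\to0$ gives the weight filtration on the nose. This is presumably close in spirit to the computation carried out in \cite{Z}; in any case it is a valid substitute for the citation.
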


\begin{thm} \label{thm}
   Let $M$ be a full rank $m\times n$ integer matrix, where $m\ge n$. Then the $P=W$ identity holds for the isolated cluster variety $X(M)$ and the Lagrangian fibration $Y(M)\to \BR^{n+m}$, i.e.
    \[
    P_kH^*(Y(M),\BQ)=W_{2k} H^*(X(M),\BQ)=W_{2k+1}H^*(X(M),\BQ),~~ k\ge 0. 
    \]
\end{thm}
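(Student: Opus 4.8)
The plan is to reduce the general statement to the two base cases already established---Lemma~\ref{trivial} for $\BC^*$ and Proposition~\ref{base} for $X(d)$---by combining the structural results of Section~3.1--3.2 with the K\"unneth and finite-quotient compatibilities of Section~2.2. The key point is that Corollary~\ref{cor} and Proposition~\ref{3.8} express both the algebraic variety and the Lagrangian fibration in the \emph{same} product-then-quotient form: on the one side $X(M)\cong (X(d)^n/\CN(T^T))\times(\BC^*)^{m-n}$, and on the other side $h\colon Y(M)\to\BR^{n+m}$ is the Cartesian product of $Y_d^n/\CN(T^T)\to D^n$ with the trivial fibration $(\BC^*)^{m-n}\to\BR^{m-n}$. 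So I would first dispatch the product structure and then the quotient.

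First I would handle the product. By Lemma~\ref{trivial} the projection $(\BC^*)^{m-n}\to\BR^{m-n}$ admits a perverse decomposition, and by Proposition~\ref{3.7} the fibration $Y(d)\to\BR^2$ underlies the K\"ahler elliptic fibration $Y_d\to D$, which admits a perverse decomposition by the decomposition theorem \cite[Theorem 0.6]{S}. Hence each factor of the Cartesian product defining $h\colon Y\binom{dI_n}{0}\to\BR^{n+m}$ admits a perverse decomposition, so Proposition~\ref{kunneth} applies and gives
\[
P_k H^*\!\left(Y\tbinom{dI_n}{0},\BQ\right)=\Big\langle \alpha_1\boxtimes\cdots\boxtimes\alpha_n\boxtimes\beta \ \Big|\ \textstyle\sum_i\Fp(\alpha_i)+\Fp(\beta)\le k\Big\rangle,
\]
where $\alpha_i\in H^*(Y(d),\BQ)$ and $\beta\in H^*((\BC^*)^{m-n},\BQ)$. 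On the Betti side, the weight filtration is multiplicative for products of mixed Hodge structures, so $W_\bullet H^*(X\binom{dI_n}{0},\BQ)$ is the analogous span with $\Fp$ replaced by $\tfrac12 w$ (where $w$ is the weight, using that each factor is pure-even-weight-jumps by Lemma~\ref{trivial} and Proposition~\ref{base}). Matching the two using the base cases $P_k=W_{2k}=W_{2k+1}$ for $X(d)$ and for $\BC^*$ then yields $P=W$ for $X\binom{dI_n}{0}$.

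Next I would pass to the quotient by $G:=\CN(T^T)\subset(\BZ/d\BZ)^n$. Geometrically $G$ acts on $Y\binom{dI_n}{0}$ through its action on the $Y(d)^n$ factor by deck transformations (Proposition~\ref{3.3}, equation~(\ref{auto})), properly discontinuously by analytic diffeomorphisms, and the fibration $h$ descends to the quotient (Proposition~\ref{3.8}); moreover these are isomorphisms of K\"ahler manifolds (Proposition~\ref{3.7}, Proposition~\ref{Kahler}), and on the algebraic side $G$ acts by algebraic automorphisms. Proposition~\ref{quotient} then identifies $P_k H^*(Y(M),\BQ)$ with $(P_k H^*(Y\binom{dI_n}{0},\BQ))^G$, while on the Betti side $W_{2k}H^*(X(M),\BQ)=(W_{2k}H^*(X\binom{dI_n}{0},\BQ))^G$ because taking $G$-invariants is exact on $\BQ$-vector spaces and the $G$-action respects the mixed Hodge structure (it is induced by algebraic automorphisms). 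Combining with the product case finishes the proof.

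The main obstacle is the bookkeeping in the second and third steps: one must verify that the \emph{same} identification of $H^*(Y(M),\BQ)$ with a $G$-invariant subspace of $H^*(Y\binom{dI_n}{0},\BQ)$ is used on both the perverse and the weight sides---i.e.\ that the real-analytic quotient map underlying Proposition~\ref{quotient} and the algebraic quotient map underlying the mixed-Hodge comparison induce literally the same map on cohomology. This is where the compatibility built into Propositions~\ref{3.3}, \ref{3.7}, and \ref{3.8}---that the complex structure forgotten to pass from $X$ to $Y$ is respected by the $G$-action and by the fibration---is essential; once that coherence is in hand, the matching of filtered pieces is formal. A secondary point to check carefully is that Proposition~\ref{kunneth} is being applied to a genuine Cartesian product of fibrations (as opposed to merely a product of total spaces), which is exactly the content of Proposition~\ref{3.8}.
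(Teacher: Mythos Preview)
Your proposal is correct and follows essentially the same route as the paper: reduce via Corollary~\ref{cor} and Proposition~\ref{3.8} to a product-then-quotient form, invoke the K\"unneth compatibility (Proposition~\ref{kunneth}) and the finite-quotient compatibility (Proposition~\ref{quotient}) on the perverse side, the analogous multiplicativity and $G$-invariance on the weight side, and then match factor by factor using the base cases Lemma~\ref{trivial} and Proposition~\ref{base}. The only organizational difference is that you first establish $P=W$ for the unquotiented product $X\binom{dI_n}{0}$ and then descend, whereas the paper applies K\"unneth to the two-factor product $Y_d^n/\Gamma\times(\BC^*)^{m-n}$ and handles the quotient on the first factor separately; the ingredients and logic are the same.
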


\begin{proof}
   Given a full rank $m\times n$ integer matrix $M$, Proposition \ref{lemma} produces an integer $d$ and a full rank $m\times m$ integer matrix $T$ which are used in Corollary \ref{cor} and Proposition \ref{3.8}. 
   To simplify notations, we write $\Gamma=\CN(T^T)$. Then by Corollary \ref{cor}, $X(M)=X(d)^{\times n}/\Gamma\times (\BC^*)^{m-n}$. Then the cohomology group decomposes as
   \[
        H^*(X(M),\BQ)\cong \left(H^*(X(d),\BQ)^{\otimes n}\right)^{\Gamma}\otimes H^*(\BC^*,\BQ)^{\otimes m-n}
   \]
   Since $\Gamma$ acts by automorphisms of algebraic varieties, it preserves the weight filtration, and hence 
   \[
   W_k(H^*(X(d),\BQ)^{\otimes n})^\Gamma=(W_kH^*(X(d),\BQ)^{\otimes n})^\Gamma.
   \]
   Since the K\"unneth decomposition respects mixed Hodge structures (see for example \cite[Example 3.2, Theorem 5.44]{PS}), we have 
   \[
   \begin{split}
   W_kH^*(X(M),\BQ)\cong &\sum_{t+t'=k}(W_t(H^*(X(d),\BQ)^{\otimes n}))^\Gamma\otimes W_{t'}(H^*(\BC^*,\BQ)^{\otimes m-n}) \\
   \cong&\sum_{t_1+\cdots+t_{n+m}=k}\left(\bigotimes_{i=1}^n W_{t_i}H^*(X(d),\BQ)\right)^\Gamma\bigotimes \bigotimes_{j=n+1}^{m} W_{t_j}H^*(\BC^*,\BQ). 
   \end{split}
   \]
   
   On the other hand, by Proposition \ref{3.8}, the proper fibration $h:Y(M)\to \BR^{n+m}$ is the Cartesian product of $Y_d^n/\Gamma\to D^n$ and the trivial torus fibration $(\BC^*)^{m-n}\to \BR^{m-n}$. By Proposition \ref{Kahler}, the fibration $Y_d^n/\Gamma\to D^n$ is a proper morphism of K\"ahler varieties, hence admits a perverse decomposition in the sense of equation (\ref{0101}) \cite[Theorem 0.6]{S}. Together with Lemma \ref{trivial}, we conclude that the K\"unneth decomposition respects the perverse filtration associated with $h$:  
   \[
   \begin{split}
   P_kH^*(Y(M),\BQ)\cong &\sum_{t+t'=k}\left(P_t(H^*(Y(d),\BQ)^{\otimes n})\right)^\Gamma\otimes P_{t'}(H^*(\BC^*,\BQ)^{\otimes m-n}) \\
    \textup{(Proposition \ref{quotient})}\cong&\sum_{t+t'=k}P_t\left((H^*(Y(d),\BQ)^{\otimes n})^\Gamma\right)\otimes P_{t'}(H^*(\BC^*,\BQ)^{\otimes m-n})\\ 
   \cong&\sum_{t_1+\cdots+t_{n+m}=k}\left(\bigotimes_{i=1}^n P_{t_i}H^*(Y(d),\BQ)\right)^\Gamma\bigotimes \bigotimes_{j=n+1}^{m} P_{t_j}H^*(\BC^*,\BQ). 
   \end{split}
   \]
   By Lemma \ref{trivial} and Proposition \ref{base}, we conclude that the $P=W$ identity holds for the isolated cluster variety $X(M)$ and the Lagrangian fibration $h:Y(M)\to \BR^{n+m}$.

\end{proof}

\end{document}